\documentclass{svjour3}                     
\usepackage{graphicx}
\usepackage{amssymb}
\usepackage{amsmath}
\usepackage{algorithm} 
\usepackage{algorithmic}
\usepackage[misc]{ifsym}
\usepackage{multirow}
\usepackage{algorithm,float}

\usepackage{xcolor}        
\usepackage{longtable}
\usepackage{booktabs}    
\usepackage{tabularx}    
\usepackage{multirow}    
\usepackage{makecell}    
\usepackage{siunitx}     
\usepackage{caption}
\usepackage{longtable}
\usepackage{array}
\usepackage{needspace}

\usepackage{tabularray}
\usepackage{manyfoot}%
\usepackage{url}

\makeatletter
\newenvironment{breakablealgorithm}
  {
   \begin{center}
     \refstepcounter{algorithm}
     \hrule height.8pt depth0pt \kern2pt
     \renewcommand{\caption}[2][\relax]{
       {\raggedright\textbf{\ALG@name~\thealgorithm} ##2\par}%
       \ifx\relax##1\relax 
         \addcontentsline{loa}{algorithm}{\protect\numberline{\thealgorithm}##2}%
       \else 
         \addcontentsline{loa}{algorithm}{\protect\numberline{\thealgorithm}##1}%
       \fi
       \kern2pt\hrule\kern2pt
     }
  }{
     \kern2pt\hrule\relax
   \end{center}
  }
\makeatother

\renewcommand{\thesection}{\arabic{section}.}

\renewcommand{\thesubsection}{\arabic{section}.\arabic{subsection}}

\newtheorem{them}{Theorem}[section]
\newtheorem{lem}{Lemma}[section]

\newtheorem{pro}{Proposition}[section]
\newtheorem{rem}{Remark}[section]

\begin{document}

\title{New Outer Approximation Algorithms for  Nonsmooth Convex MINLP Problems}

\titlerunning{New OA Algorithm for Convex MINLP}



\author{Zhou Wei \and He-Yi Liu \and Bo Zeng}


\institute{Zhou Wei \at Hebei Key Laboratory of Machine Learning and Computational Intelligence \& College of Mathematics and Information Science, Hebei University, Baoding, 071002, China\\ \email{weizhou@hbu.edu.cn} \\ 
He-Yi Liu \at College of Mathematics and Information Science, Hebei University, Baoding, 071002, China \\ \email{hyliu@stumail.hbu.edu.cn} \\
Bo Zeng  \at Department of Industrial Engineering, University of Pittsburgh, Pittsburgh, PA 15261, USA\\ \email{bzeng@pitt.edu}
}


\date{Received: date / Accepted: date}

\maketitle

\begin{abstract}
This paper presents a novel outer approximation algorithm for nonsmooth mixed-integer nonlinear programming (MINLP) problems. The method proceeds by fixing the integer variables and solving the resulting nonlinear convex subproblem. When the subproblem is feasible, valid linear cuts are derived by computing suitable subgradients of the objective and constraint functions at the optimal solution, utilizing KKT optimality conditions. A new parameter, defined through the nonlinear constraint functions, is introduced to facilitate the generation of these cuts. For infeasible subproblems, a feasibility problem is solved, and valid linear cuts are generated via KKT-based subgradients to exclude the infeasible integer assignment.

By integrating both types of cuts, a mixed-integer linear programming (MILP) master problem is formulated and proven equivalent to the original MINLP. This equivalence underpins a new outer approximation algorithm, which is guaranteed to terminate after a finite number of iterations.

Numerical experiments on smooth convex MINLP problems demonstrate that the proposed algorithm produces tighter MILP relaxations than the classical outer approximation method. Furthermore, the approach offers an alternative mechanism for generating linear cuts, extending beyond reliance solely on first-order Taylor expansions and shows that the efficiency of outer approximation algorithm is strongly dependent on the inherent structure of the MINLP problem.

\keywords{Outer approximation \and Convex MINLP \and KKT \and Subgradient
 \and MILP}

\subclass{ 90C11\and 90C25\and 90C30}
\end{abstract}

\section{Introduction}


The primary objective of this work is to develop a novel outer-approximation method for nonsmooth convex MINLP. This involves to reformulating a convex MINLP into an equivalent MILP master problem, construct an algorithmic framework to solve a sequence of resulting MILP relaxations, guaranteeing convergence to the optimal solution, and finally prove finite termination of the constructed algorithm.

\subsection{MINLP Problems and Solution Methods}


Many optimization problems encountered in engineering and science involve both discrete and continuous decision variables within nonlinear systems, which are formally described as mixed-integer nonlinear programming (MINLP) problems.
Such MINLP can be represented mathematically as follows:

\begin{equation}\label{1.1}
	(P) \left \{
	\begin{aligned}
		\min\limits_{x,y} \   & f(x, y)\\
		{\rm s.t.}\ \  & g_i(x, y)\leq 0, i=1,\cdots,m,\\
		& x\in X, y\in Y\cap \mathbb{Z}^p.
	\end{aligned}
	\right.
\end{equation}
where $f,g_i: \mathbb{R}^n\times \mathbb{R}^p\rightarrow \mathbb{R}$ $ (i=1,\cdots,m)$ are nonlinear functions, $X\subseteq\mathbb{R}^n$ is a nonempty bounded polyhedral set and $Y\subseteq\mathbb{R}^p$ is a polyhedral set.

MINLP offers a powerful framework for addressing a wide range of real-world optimization challenges. Its applications extend across numerous fields, including engineering (e.g., block layout, process synthesis, water network design), finance (portfolio optimization), energy (nuclear reactor core management), healthcare (cancer treatment planning), and industrial operations (production planning and petrochemical pooling problems). We refer the reader to bibliographies \cite{Biegler2004,Bonami2009,Boukouvala2016,Bragalli2012,Cao2011,Castillo2005,Misener2009advances,Quist1999} and references therein for more details on MINLP applications.

It is widely established that the majority of deterministic approaches for solving convex MINLP problems rely on decomposition strategies. In these methods, the optimal solution is attained by iteratively solving a series of more tractable subproblems. Key deterministic techniques include the extended cutting plane (ECP) method, the extended supporting hyperplane (ESH) method, generalized Benders decomposition (GBD), outer approximation (OA) and LP/NLP-based branch-and-bound. The ECP algorithm, introduced by Westerlund and Petterson \cite{WesterlundPettersson1995}, extends Kelly's cutting plane approach for convex NLP problems developed in \cite{Kelley1960}. Subsequently, Kronqvist et al. \cite{KronqvistLundellWesterlund2016} proposed the ESH method as a specialized algorithm for convex MINLP. Generalized Benders decomposition, first formulated by Geoffrion \cite{Geoffrion1972}, generalizes the earlier Benders decomposition method in \cite{Benders1962} for mixed-integer linear programming (MILP). In GBD, the master problem is projected onto the integer variable space, thereby being expressed solely in terms of discrete variables. The OA method was originally introduced by Duran and Grossmann \cite{DuranGrossmann1986}, with further convex MINLP properties elaborated by Fletcher and Leyffer \cite{FletcherLeyffer1994}. Quesada and Grossmann \cite{QuesadaGrossmann1992} later integrated OA within a branch-and-bound framework based on the successive solution of linear and nonlinear subproblems. This LP/NLP-based branch-and-bound approach maintains a single branch-and-bound tree, in which the MILP master problem is dynamically updated throughout the solution process. In this paper, we focus on a new OA algorithm for solving nonsmooth convex MINLP.

\subsection{Related Research on OA in the Literature}


This subsection provides a review of the evolution of the OA method for solving convex MINLP problems. The pioneering work of Duran and Grossmann \cite{DuranGrossmann1986} in 1986 established the original OA framework for a specific class of MINLP problems where subproblems are affine in the integer variables and convex in the continuous variables.
Fletcher and Leyffer \cite{FletcherLeyffer1994}  in 1994 advanced the method to handle convex MINLPs with continuously differentiable functions, also considering a nonsmooth extension within a penalty function context.
The application of OA was extended to separable nonconvex MINLPs by Kesavan et al. \cite{KesavanAllgorGatzkeBarton2004} in 2004, who proposed algorithms involving sequences of relaxed master problems and nonlinear subproblems.
For nonsmooth convex MINLPs, Eronen and M\"{a}kel\"{a} \cite{Eronen2014}  in 2014 developed a generalized OA approach. This was followed by Wei and Ali \cite{WeiAli2015-JOGO,WeiAli2015-JOTA} in 2015, who further extended OA for nonsmooth problems based on subgradients and KKT optimality conditions.


Recent enhancements have focused on improving convergence and robustness. Su et al. \cite{SuTangBernalGrossmann2018} in 2018 integrated a feasibility pump into the OA scheme, creating a proximal OA method that accelerates the generation of feasible solutions. Delfino and de Oliveira \cite{DelnoDeOliveira2018} combined OA with bundle methods for nonsmooth convex MINLP.



A significant stride was made by Kronqvist Bernal and Grossmann \cite{KronqvistBernalGrossmann2020} in 2020, who efficiently incorporated regularization and second-order derivative information into the OA framework using ideas from the level method. Their use of a second-order Taylor expansion of the Lagrangian significantly reduced iteration counts for highly nonlinear problems.  De Mauri et al. \cite{DeMauriGillisSweversPipeleers2020} derived scaled quadratic cuts based on second-order expansions, providing improved underestimations for convex functions to enhance OA and partial surrogate methods. Coey, Lubin and Vielma \cite{CoeyLubinVielma2020} presented a branch-and-bound LP OA algorithm for convex MINLP reformulated as mixed-integer conic problems, while Muts, Nowak and Hendrix \cite{MutsNowakHendrix2020}  proposed a two-phase decomposition-based OA algorithm that iteratively refines approximations by solving sequences of MILP subproblems.


Building directly on the regularization concepts from \cite{KronqvistBernalGrossmann2020}, Bernal et al. \cite{BernalPengKronqvistGrossmann2022} in 2022 presented a regularized OA method that incorporates new regularization functions based on distance metrics and Lagrangian approximations.

\subsection{Contributions and Outline of the Paper}

In this paper, we conduct an in-depth investigation of the OA method for solving nonsmooth convex MINLP problems. By utilizing solutions derived from feasible subproblems, we introduce new parameters defined in terms of the nonlinear constraint functions and their corresponding solutions. These parameters are then integrated into the OA framework, leading to an MILP master problem that is formally proven to be equivalent to the original MINLP.

The key contributions of our proposed OA method are as follows:
\begin{itemize}
	\item [-] Instead of relying solely on first-order Taylor expansions for linearizing nonlinear constraints, our method offers an alternative linearization approach that enhances flexibility and adaptability.

	\item [-] The proposed linearization scheme generates constraints that define a tighter outer-approximating polyhedron, more closely overestimating the original nonlinear feasible region compared to conventional OA.

	\item [-] Our approach explicitly leverages structural information from the nonlinear constraints in the convex MINLP, demonstrating that problem structure plays a crucial role in solution efficiency, an aspect often underutilized in classical OA implementations.

\end{itemize}
Through these innovations, the proposed OA method improves both theoretical rigor and computational tractability for convex MINLP problems.




The outline of this paper is organized as follows. In Section 2, we give some definitions and preliminaries to be used in this paper.  In Section 3, we develop a new OA method for nonsmooth convex MINLP problems. For feasible subproblems, we construct valid linear cuts using suitable subgradients derived from KKT optimality conditions and enrich them with new parameters defined via the nonlinear constraint functions. For infeasible subproblems, we solve a feasibility problem, select appropriate subgradients, and generate linear cuts to exclude infeasible integers. Integrating both cases, we formulate a MILP master program, which is proven equivalent to the original problem. Based on this equivalence, a new OA algorithm is designed to solve a sequence of MILP relaxations until the optimum of the original problem is attained. Finite termination of the algorithm is established. Example 3.1 illustrates the validity and superior performance of the proposed method compared to the classical OA. Section 4 details the implementation of the new OA algorithm and provides a numerical comparison with the classical OA using test instances from MINLPLib \cite{BussiecDrudMeeraus2003}.
Conclusions of this paper are given in Section 5.

\section{Preliminaries}
Let $\mathbb{R}^n$  be the norm of an Euclidean space equipped with  the inner product  $\langle\cdot, \cdot\rangle$. 
For any subset $C\subset \mathbb{R}^n$, we denote by ${\rm int}(C)$, ${\rm cl}(C)$ and ${\rm conv}(C)$ the interior, the closure and the convex hull of $C$  respectively.

Let $A$ be a closed convex set of $\mathbb{R}^n$ and $x\in A$. We denote by  $T(A, x)$ the contingent cone of $A$ at $x$ that is defined as
$$
T(A, x):={\rm cl}(\mathbb{R}_+(A-x)).
$$
Thus, $v\in T(A, x)$ if and only if there exist $v_k\rightarrow v$ and $t_k\rightarrow 0^+$ such that $x+t_kv_k\in A$ for all $k\in\mathbb{N}$, where $\mathbb{N}$ denotes the set of all natural numbers.

We denote by $N(A, x)$ normal cone of $A$ at $x$ which is defined by
\begin{equation*}
N(A, x):=\{\gamma \in \mathbb{R}^n: \langle \gamma , y-x\rangle\leq 0\ \ {\rm for\ all} \ y\in A\}.
\end{equation*}
It is known that 
\begin{equation*}
  N(A, x)=\{\gamma \in \mathbb{R}^n: \langle \gamma , v\rangle\leq 0\ \ {\rm for\ all} \ v\in T(A, x)\}.
\end{equation*}

Let $\varphi: \mathbb{R}^n\rightarrow \mathbb{R}\cup\{+\infty\}$ be a proper lower semicontinuous convex function and $
x\in {\rm dom}(\varphi):=\{u\in \mathbb{R}^n: \varphi(u)<+\infty\}$. We denote by $\partial\varphi(x)$ the  subdifferential  of $\varphi$ at $x$ that is defined by
\begin{equation}\label{2.1a} 
\partial\varphi(x):=\{\alpha \in \mathbb{R}^n:
\langle \alpha, y-x\rangle\leq\varphi(y)-\varphi(x)\ \ {\rm for\ all} \ y\in \mathbb{R}^n\}
\end{equation}
Each vector in $\partial \varphi(x)$ is said to be a subgradient of $\varphi$ at $x$.

\medskip

The following lemma is cited from \cite[Proposition 2.3]{WeiAli2015-JOGO} and used in our analysis.
\begin{lem}\label{lem2.1}
Let $\phi:\mathbb{R}^n\times\mathbb{R}^p\rightarrow \mathbb{R}$ be a continuous and convex function and suppose that $(\bar x, \bar y)\in \mathbb{R}^n\times\mathbb{R}^p$. Then for any $\alpha\in\partial\phi(\cdot,\bar y)(\bar x)$, there exists $\beta\in\mathbb{R}^p$ such that $(\alpha, \beta)\in\partial\phi(\bar x, \bar y)$.
\end{lem}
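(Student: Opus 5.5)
The plan is to obtain $\beta$ by a separation argument, which amounts to a Hahn--Banach extension of a linear minorant. Fix $\alpha\in\partial\phi(\cdot,\bar y)(\bar x)$. By definition \eqref{2.1a} applied to $\phi(\cdot,\bar y)$,
$$\phi(x,\bar y)\ge \phi(\bar x,\bar y)+\langle\alpha,x-\bar x\rangle\quad\text{for all } x\in\mathbb{R}^n .$$
Define the convex function $\psi(x,y):=\phi(x,y)-\phi(\bar x,\bar y)-\langle\alpha,x-\bar x\rangle$. Then $\psi$ is continuous and finite everywhere, $\psi(\bar x,\bar y)=0$, and $\psi(x,\bar y)\ge 0$ for all $x$. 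The goal is to find $\beta\in\mathbb{R}^p$ with $\psi(x,y)\ge\langle\beta,y-\bar y\rangle$ for all $(x,y)$. This is exactly $(\alpha,\beta)\in\partial\phi(\bar x,\bar y)$.

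To produce $\beta$, consider the marginal function
$$h(y):=\inf_{x\in\mathbb{R}^n}\psi(x,y).$$
Because $\psi$ is jointly convex, $h$ is convex with values in $\mathbb{R}\cup\{-\infty\}$. We have $h(y)<+\infty$ everywhere since $\psi$ is finite. Moreover $h(\bar y)=0$: the infimum over the slice $y=\bar y$ is at least $0$ and is attained at $x=\bar x$. A convex function finite at an interior point of its domain cannot take the value $-\infty$ anywhere, since otherwise convex combinations would force $h(\bar y)=-\infty$. Here $\mathrm{dom}\,h=\mathbb{R}^p$, so $h$ is a finite convex function on all of $\mathbb{R}^p$.

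A finite convex function on $\mathbb{R}^p$ is continuous and has nonempty subdifferential at every point. Pick $\beta\in\partial h(\bar y)$. Then for all $(x,y)$,
$$\psi(x,y)\ge h(y)\ge h(\bar y)+\langle\beta,y-\bar y\rangle=\langle\beta,y-\bar y\rangle .$$
Unwinding the definition of $\psi$ gives $\phi(x,y)-\phi(\bar x,\bar y)\ge\langle(\alpha,\beta),(x,y)-(\bar x,\bar y)\rangle$, which is the claim.

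The main obstacle is ruling out $h\equiv-\infty$ somewhere, i.e.\ the finiteness of $h$. It rests on the standard fact that an improper convex function with value $-\infty$ at some point is $-\infty$ on the relative interior of its domain. Here the domain is all of $\mathbb{R}^p$, so $h(\bar y)=0$ forces finiteness. To keep things self-contained, argue directly: if $h(y_1)=-\infty$, set $y_2=2\bar y-y_1$. Then $\bar y=\tfrac12 y_1+\tfrac12 y_2$, and convexity gives $h(\bar y)\le\tfrac12h(y_1)+\tfrac12h(y_2)=-\infty$, since $h(y_2)<\infty$. This contradicts $h(\bar y)=0$.

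Nonemptiness of $\partial h(\bar y)$ then follows from supporting-hyperplane separation of the epigraph of $h$ at the interior point $\bar y$ of its domain. Continuity of $h$, a finite convex function on $\mathbb{R}^p$, ensures the separating hyperplane is non-vertical.
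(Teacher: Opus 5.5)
Your proof is correct. The paper gives no proof of this lemma; it imports it from Wei and Ali (their Proposition 2.3), so the comparison is with the standard argument rather than with anything in the paper.

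The standard argument writes $\phi(\cdot,\bar y)=\phi\circ A$ with $A(x):=(x,\bar y)$ and applies the affine chain rule, which is valid since $\phi$ is finite and continuous. That rule says $\partial\phi(\cdot,\bar y)(\bar x)$ is exactly the image of $\partial\phi(\bar x,\bar y)$ under the projection $(\alpha,\beta)\mapsto\alpha$. Equivalently, the max formula shows that both sets have the support function $d\mapsto\phi'((\bar x,\bar y);(d,0))$.

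Your route is different. You tilt $\phi$ by $\alpha$, pass to the value function $h(y)=\inf_x\psi(x,y)$, and reduce the claim to the existence of one subgradient of a finite convex function of $y$. Your exclusion of the value $-\infty$ via $y_2=2\bar y-y_1$ and the chain $\psi(x,y)\ge h(y)\ge\langle\beta,y-\bar y\rangle$ are both sound.

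The chain-rule proof is a one-liner once the calculus rule is quoted, and it gives the full set identity at once. Yours is self-contained, resting on a single separation fact. It also gives something extra: reading your inequalities backwards shows that the admissible $\beta$ for a given $\alpha$ form exactly $\partial h(\bar y)$. In other words, they are the $y$-subgradients of the optimal value of the $\alpha$-tilted problem in $x$.
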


The following lemma is taken from \cite[Theorem 3.23]{Phelps1989}.
\begin{lem}\label{lem2.2}
	Let $\varphi_1,\varphi_2: \mathbb{R}^n\rightarrow \mathbb{R}\cup\{+\infty\}$ be proper lower semicontinuous convex functions and $\bar x\in {\rm dom}(\varphi_1)\cap{\rm int}({\rm dom}(\varphi_2))$. Then $$\partial(\varphi_1+\varphi_2)(
	\bar x)=\partial\varphi_1(
	\bar x)+\partial\varphi_2(
	\bar x).$$
\end{lem}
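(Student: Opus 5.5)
This is the classical sum rule for subdifferentials, and I plan to prove it by splitting it into two inclusions.

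\textbf{The easy inclusion.} For $\partial\varphi_1(\bar x)+\partial\varphi_2(\bar x)\subseteq\partial(\varphi_1+\varphi_2)(\bar x)$, take $\alpha_i\in\partial\varphi_i(\bar x)$. Adding the two defining inequalities in \eqref{2.1a} gives
$$\langle\alpha_1+\alpha_2,y-\bar x\rangle\le(\varphi_1+\varphi_2)(y)-(\varphi_1+\varphi_2)(\bar x)$$
for all $y$. This uses only that $\bar x\in{\rm dom}(\varphi_1)\cap{\rm dom}(\varphi_2)$.

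\textbf{The hard inclusion.} Take $\alpha\in\partial(\varphi_1+\varphi_2)(\bar x)$. Replacing $\varphi_1$ by $\varphi_1-\langle\alpha,\cdot\rangle-(\varphi_1+\varphi_2)(\bar x)$ (an affine shift), I may assume $\alpha=0$ and $\bar x$ minimizes $\varphi_1+\varphi_2$ with minimal value $0$. Thus $\varphi_1(y)\ge-\varphi_2(y)$ for all $y$, with equality at $\bar x$.

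\textbf{Separation.} Consider the convex sets
$$A=\{(y,t):t>\varphi_1(y)\},\qquad B=\{(y,t):t\le-\varphi_2(y)\}.$$
These are disjoint, and $A$ is convex with nonempty interior. Indeed, since $\varphi_2$ is convex, proper and lsc, it is continuous on ${\rm int}({\rm dom}\,\varphi_2)$; in finite dimensions this is automatic. Hence the hypograph $B$ has nonempty interior near $(\bar x,-\varphi_2(\bar x))$. By the Hahn--Banach/separation theorem there exists $(\gamma,s)\neq 0$ and $c$ with
$$\langle\gamma,y\rangle+st\ge c\ \text{on }A,\qquad \langle\gamma,y\rangle+st\le c\ \text{on }B.$$

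\textbf{Main obstacle: showing the separator is non-vertical.} I need $s\neq0$, and this is exactly where the qualification $\bar x\in{\rm dom}(\varphi_1)\cap{\rm int}({\rm dom}\,\varphi_2)$ enters. Suppose $s=0$. Then $\langle\gamma,y\rangle\ge c$ on ${\rm dom}\,\varphi_1$ and $\langle\gamma,y\rangle\le c$ on ${\rm dom}\,\varphi_2$. Since $\bar x$ lies in both, $\langle\gamma,\bar x\rangle=c$. But $\bar x$ is interior to ${\rm dom}\,\varphi_2$, so $\langle\gamma,\cdot\rangle\le c$ on a neighborhood of $\bar x$ forces $\gamma=0$, contradicting $(\gamma,s)\ne0$.

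Letting $t\to+\infty$ in $A$ gives $s>0$, so I may normalize $s=1$. Then
$$\varphi_1(y)\ge c-\langle\gamma,y\rangle\ge-\varphi_2(y),$$
and both inequalities are equalities at $\bar x$. The first inequality gives $-\gamma\in\partial\varphi_1(\bar x)$, and the second gives $\gamma\in\partial\varphi_2(\bar x)$. Hence $0=-\gamma+\gamma\in\partial\varphi_1(\bar x)+\partial\varphi_2(\bar x)$. Undoing the affine shift gives the claim for general $\alpha$.
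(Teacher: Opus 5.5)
The paper gives no proof of this lemma; it quotes the Moreau--Rockafellar sum rule from Phelps' monograph (Theorem 3.23), and your argument is the standard separation proof of that result (strict epigraph of $\varphi_1$ against the hypograph of $-\varphi_2$, with non-verticality coming from $\bar x\in{\rm int}({\rm dom}\,\varphi_2)$), so it is correct and follows the cited route. There are two small slips to fix. First, the shift should be $\varphi_1-\langle\alpha,\cdot-\bar x\rangle-(\varphi_1+\varphi_2)(\bar x)$: as written the minimal value is $-\langle\alpha,\bar x\rangle$ rather than $0$, and your final step needs $\varphi_1(\bar x)=-\varphi_2(\bar x)$. Second, it is $B$, not $A$, that is guaranteed a nonempty interior, since ${\rm dom}\,\varphi_1$ may be lower-dimensional; $B$ having interior is exactly what your Hahn--Banach step uses.
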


We conclude this section with the following lemma cited from \cite[Theorem 2.4.18]{Zalinescu2002}.
\begin{lem}\label{lem2.3}
Let $\varphi: \mathbb{R}^n\rightarrow \mathbb{R}$ be a continuous convex function. Define
$$\varphi_+(x):=\max\{\varphi(x), 0\}, \ \ \forall x\in\mathbb{R}^n.$$
Then $\varphi_+$ is a convex and continuous function and
\begin{equation*}
  \partial \varphi_+(x)=[0, 1]\partial \varphi(x)
\end{equation*}
holds for all $x\in\mathbb{R}^n$ with $\varphi(x)=0$. 
\end{lem}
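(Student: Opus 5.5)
The statement to prove is Lemma \ref{lem2.3}: for a finite continuous convex $\varphi$ and $\varphi_+=\max\{\varphi,0\}$, at every point $x$ with $\varphi(x)=0$ we have $\partial\varphi_+(x)=[0,1]\partial\varphi(x)$. Convexity and continuity of $\varphi_+$ are immediate, since it is the pointwise maximum of two continuous convex functions. For the formula I will prove the two inclusions separately. The easy inclusion is elementwise via the subgradient inequality (\ref{2.1a}); the reverse inclusion goes through directional derivatives and a separation argument.

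\textbf{Inclusion $[0,1]\partial\varphi(x)\subseteq\partial\varphi_+(x)$.} Take $t\in[0,1]$ and $\alpha\in\partial\varphi(x)$. For any $y$, using $\varphi(x)=\varphi_+(x)=0$,
\[
\langle t\alpha,y-x\rangle\le t\varphi(y)\le t\varphi_+(y)\le\varphi_+(y)=\varphi_+(y)-\varphi_+(x).
\]
The middle inequality uses $\varphi\le\varphi_+$ and $t\ge0$. The last one uses $\varphi_+\ge0$ and $t\le1$. Hence $t\alpha\in\partial\varphi_+(x)$.

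\textbf{Inclusion $\partial\varphi_+(x)\subseteq[0,1]\partial\varphi(x)$.} Write $\varphi'(x;d)$ for the directional derivative. Since $\varphi$ is finite and convex on $\mathbb{R}^n$, $\varphi'(x;\cdot)$ is a finite sublinear function and is the support function of the nonempty compact convex set $\partial\varphi(x)$. Because $\varphi(x)=0$,
\[
\varphi_+'(x;d)=\lim_{s\to0^+}\frac{\max\{\varphi(x+sd),0\}}{s}=\max\{\varphi'(x;d),0\}.
\]
The set $K:=[0,1]\partial\varphi(x)=\mathrm{conv}(\{0\}\cup\partial\varphi(x))$ is compact and convex, being the image of the compact set $[0,1]\times\partial\varphi(x)$ under a continuous map. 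Its support function is
\[
\sigma_K(d)=\max\{0,\sup_{\alpha\in\partial\varphi(x)}\langle\alpha,d\rangle\}=\max\{0,\varphi'(x;d)\}=\varphi_+'(x;d).
\]
Now let $\gamma\in\partial\varphi_+(x)$. For every $d$, $\langle\gamma,d\rangle\le\varphi_+'(x;d)=\sigma_K(d)$. If $\gamma\notin K$, strict separation of the point $\gamma$ from the closed convex set $K$ would give some $d$ with $\langle\gamma,d\rangle>\sigma_K(d)$, a contradiction. So $\gamma\in K$.

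\textbf{Main obstacle.} The delicate steps are the directional-derivative computation and identifying $\sigma_K$. Both need finiteness of $\varphi$ everywhere, which yields compactness of $\partial\varphi(x)$ and the max formula $\varphi'(x;d)=\max_{\alpha\in\partial\varphi(x)}\langle\alpha,d\rangle$. These are standard facts for finite convex functions on $\mathbb{R}^n$ and I will cite them. The limit formula for $\varphi_+'$ follows because $s\mapsto\max\{\cdot,0\}$ is continuous and positively homogeneous, so it commutes with the difference quotient. Everything else reduces to the two inclusions above.
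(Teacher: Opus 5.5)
Your proof is correct. The paper gives no proof of this lemma: it cites \cite[Theorem 2.4.18]{Zalinescu2002}, the subdifferential rule for a pointwise maximum of finitely many convex functions that are continuous at the point. That rule says $\partial(\max_i f_i)(x)={\rm conv}\bigcup_{i\in I(x)}\partial f_i(x)$, where $I(x)$ is the set of active indices. Take $f_1=\varphi$ and $f_2\equiv 0$. Since $\varphi(x)=0$, both pieces are active and $\partial f_2(x)=\{0\}$, so $\partial\varphi_+(x)={\rm conv}(\partial\varphi(x)\cup\{0\})=[0,1]\partial\varphi(x)$. This is exactly the identity you record when you define $K$.

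Your argument is essentially the standard proof of that max rule, specialized to two functions. You get the easy inclusion from the subgradient inequality. You get the reverse inclusion from $\varphi_+'(x;\cdot)=\max\{\varphi'(x;\cdot),0\}=\sigma_K$ and strict separation from the compact convex set $K$. The citation is shorter and holds well beyond $\mathbb{R}^n$, under mere continuity at the point. Your route is self-contained in finite dimensions and uses only the max formula and the nonemptiness and compactness of $\partial\varphi(x)$. It also shows explicitly why $K$ must be closed for the separation step. The continuity hypothesis is redundant here, since a finite convex function on $\mathbb{R}^n$ is automatically continuous.
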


\setcounter{equation}{0}

\section{New OA Algorithm for Convex MINLP Problems}

In this section, we aim to derive a new OA algorithm for dealing with nonsmooth convex MINLP problems. To construct this algorithm, we need to solve nonsmooth convex NLP subproblems with detailed computing procedure, including the choice of nonnegative multipliers and subgradients that satisfy the KKT conditions and new parameters defined by these subgradients and nonlinear constraint functions.  

The  nonsmooth convex MINLP problem studied in this section is presented as follows:
\begin{equation}\label{3.1-251030}
	(P) \left \{
	\begin{aligned}
		\min\limits_{x,y} \   & f(x, y)\\
		{\rm s.t.}\ \  & g_i(x, y)\leq 0, i=1,\cdots,m,\\
		   & x\in X, y\in Y\cap \mathbb{Z}^p.
	\end{aligned}
	\right.
\end{equation}
where $f,g_i: \mathbb{R}^n\times \mathbb{R}^p\rightarrow \mathbb{R}$ $ (i=1,\cdots,m)$ are continuous convex functions, $X\subseteq\mathbb{R}^n$ is a nonempty bounded polyhedral set and $Y\subseteq\mathbb{R}^p$ is a polyhedral set.


The main goal of this section is to propose a new outer approximation algorithm for solve convex MINLP problem $(P)$ in \eqref{3.1-251030}. To this aim, it is to use the projection for expressing problem $(P)$ onto $y$ variables. For a fixed $y\in Y\cap \mathbb{Z}^p$, we consider the following nonlinear  subproblem $NLP(y)$:
\begin{equation}\label{3.2-251030}
	NLP(y) \left \{
	\begin{aligned}
		\min\limits_{x} \   & f(x, y)\\
		{\rm s.t.}\ \  & g_i(x, y)\leq 0, i=1,\cdots,m,\\
		& x\in X.
	\end{aligned}
	\right.
\end{equation}
We divide $Y$ into two disjoint subsets:
\begin{equation}\label{3.3-251030}
	T:=\{y\in Y\cap \mathbb{Z}^p: NLP(y) \ {\rm is\ feasible}\} \ \ {\rm and} \ \ S:=\{y\in Y\cap \mathbb{Z}^p: NLP(y) \ {\rm is\ infeasible}\}.
\end{equation}

{\it Throughout this section, we always assume that the following Slater constrain qualification holds:}
\begin{itemize}
	\item[] \rm (A1) {\it For any $y\in T$, 
		there is some $\hat x\in X$ such that $g_i(\hat x, y)<0$ for all $i=1,\cdots,m$.}
\end{itemize}






\subsection{Feasible Nonlinear Subproblem.} Let $y_j\in T$. Then $f(\cdot,y_j), g_i(\cdot,y_j)$ are continuous convex functions with respect to variable $x$.  Note that subproblem $NLP(y_j)$ is a convex optimization problem and then the existence of the optimal solution to $NLP(y_j)$ follows as $X$ is bounded and closed. Thus, we can assume that $x_j\in X$ solves subproblem $NLP(y_j)$. 

We denote by
\begin{equation}\label{3.4a}
I(x_j):=\{i: g_i(x_j,y_j)=0 \}
\end{equation}
the active index set.  

By Assumption (A1) and KKT optimality conditions, 
there exist nonnegative multipliers $\lambda_{j,1},\cdots,\lambda_{j,m}\geq 0$ such that
\begin{equation}\label{3.4-251030}
\left\{	\begin{aligned}
	& 0\in\partial f(\cdot,y_j)(x_j)+\sum\limits_{i\in I(x_j)}^m\lambda_{j,i}\partial g_i(\cdot,y_j)(x_j)+ N(X, x_j),\\
	& \lambda_{j,i}g_i(x_j,y_j)=0, i=1,\cdots,m,
	\end{aligned}
	\right.
\end{equation}
Thus, there exist subgradients $\alpha_j\in\partial f(\cdot,y_j)(x_j)$ and $ \xi_{j,i}\in\partial g_i(\cdot,y_j)(x_j), i=1,\cdots,m$ such that 
\begin{equation}\label{3.5-251030}
  -\alpha_j-\sum\limits_{i=1}^m\lambda_{j,i}\xi_{j,i}\in N(X, x_j).
\end{equation}
By virtue of Lemma \ref{lem2.1}, 
there exist $\beta_j\in\mathbb{R}^p$ and $\eta_{j,i}\in\mathbb{R}^p, i=1,\cdots,m$ such that
\begin{equation}\label{3.6-251030}
	(\alpha_j, \beta_j)\in\partial f(x_j,y_j) \ {\rm and} \ (\xi_{j,i},\eta_{j,i})\in\partial g_i(x_j,y_j), i=1,\cdots,m.
\end{equation}

The following proposition is useful for the new OA method to equivalently reformulate convex MINLP problem $(P)$.

\begin{pro}\label{pro3.1}
	Let $x_j\in X$ solve feasible subproblem $NLP(y_j)$ and subgradient $\alpha_j, \xi_{j,i}, i=1,\cdots,m$ satisfy \eqref{3.5-251030}. Then for any $r>0$, one has
	\begin{equation}\label{3.7-251030}
		\alpha_j^T(x-x_j)\geq 0
	\end{equation}
holds for all $x\in X$ with $g_i(x_j, y_j)+r \xi_{j,i}^T(x-x_j)\leq 0, i=1,\cdots, m$.
\end{pro}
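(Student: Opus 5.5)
Fix $r>0$ and $x\in X$ satisfying $g_i(x_j,y_j)+r\xi_{j,i}^T(x-x_j)\le 0$ for $i=1,\dots,m$. The plan is to combine the normal cone inclusion \eqref{3.5-251030} with complementary slackness from \eqref{3.4-251030}. Since $x\in X$, the definition of $N(X,x_j)$ applied to \eqref{3.5-251030} gives
\begin{equation*}
\Bigl(-\alpha_j-\sum_{i=1}^m\lambda_{j,i}\xi_{j,i}\Bigr)^T(x-x_j)\le 0,
\end{equation*}
which rearranges to
\begin{equation*}
\alpha_j^T(x-x_j)\ \ge\ -\sum_{i=1}^m\lambda_{j,i}\,\xi_{j,i}^T(x-x_j).
\end{equation*}
It therefore suffices to show that $\lambda_{j,i}\xi_{j,i}^T(x-x_j)\le 0$ for every $i$.

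Split the indices into two cases. If $i\notin I(x_j)$, then $g_i(x_j,y_j)<0$, and complementary slackness $\lambda_{j,i}g_i(x_j,y_j)=0$ forces $\lambda_{j,i}=0$. The term then vanishes. This is also consistent with the sum in \eqref{3.4-251030} running only over $I(x_j)$, so the multipliers of inactive constraints can be taken to be zero in \eqref{3.5-251030}. If $i\in I(x_j)$, then $g_i(x_j,y_j)=0$, and the assumed cut reduces to $r\xi_{j,i}^T(x-x_j)\le 0$. Since $r>0$, this gives $\xi_{j,i}^T(x-x_j)\le 0$, and multiplying by $\lambda_{j,i}\ge 0$ yields the required sign. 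Summing over $i$ gives $\alpha_j^T(x-x_j)\ge 0$.

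There is no real analytic obstacle. The one point needing care is bookkeeping: \eqref{3.5-251030} writes the sum over all $i$, while the KKT system sums over $I(x_j)$. I would state explicitly that $\lambda_{j,i}=0$ for inactive $i$, which follows from complementary slackness. This makes the inactive terms vanish regardless of the sign of $\xi_{j,i}^T(x-x_j)$, and it explains why $r$ plays no role: on active constraints it only rescales an inequality whose right-hand side is zero.
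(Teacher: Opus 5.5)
Your proof is correct and follows essentially the same route as the paper, which also pairs the normal-cone inclusion \eqref{3.5-251030} (phrased via $x-x_j\in T(X,x_j)$) with complementary slackness from \eqref{3.4-251030}. The only difference is presentational: the paper multiplies the cuts by $\lambda_{j,i}$ and sums them to get $\sum_i r\lambda_{j,i}\xi_{j,i}^T(x-x_j)\le 0$, whereas you argue term by term over active and inactive indices, which makes the index-set mismatch between \eqref{3.4-251030} and \eqref{3.5-251030} explicit.
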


{\bf Proof.} Let $r>0$ and $x\in X$ such that
\begin{equation}\label{3.8-251030}
	g_i(x_j, y_j)+r \xi_{j,i}^T(x-x_j)\leq 0, i=1,\cdots, m.
\end{equation}
Then $x-x_j\in T(X, x_j)$ and it follows from \eqref{3.5-251030} that 
\begin{equation}\label{3.9-251030}
\Big(\alpha_j+\sum\limits_{i\in I(x_j)}\lambda_{j,i}\xi_{j,i}\Big)^T(x-x_j)\geq 0.
\end{equation}
Multiply by $\lambda_{j,i}$ with the left side in \eqref{3.8-251030}, \eqref{3.4-251030} gives that 
$$
\sum_{i=1}^m r \lambda_{j,i}\xi_{j,i}^T(x-x_j)\leq 0
$$
and thus \eqref{3.7-251030} holds by \eqref{3.9-251030}. The proof is complete. \hfill$\Box$

\medskip

The following proposition follows immediately from Proposition \ref{pro3.1}.

\begin{pro}\label{pro3.2}
	Let $x_j\in X$ solve feasible subproblem $NLP(y_j)$ and subgradients $\alpha_j,\xi_{j,i}(i=1,\cdots,m)$ satisfy \eqref{3.5-251030}.
	Let $r>0$ be fixed. Consider the following linear programming $LP(r,y_j)$:
	
\begin{equation}\label{3.10-251030}
	LP(r,y_j) \left \{
\begin{aligned}
		\min_{x} \ & f(x_j, y_j)+ \alpha_j^T(x-x_j)\\
		{\rm s.t.}\   & g_i(x_j, y_j)+ r\xi_{j,i}^T(x-x_j)\leq 0, i=1,\cdots,m,\\
		&  x\in X.
\end{aligned}
	\right.
\end{equation}
Then $x_j$ is an optimal solution to $LP(r,y_j)$ with  the optimal value $f(x_j,y_j)$.
\end{pro}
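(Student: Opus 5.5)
The plan is to check two things directly: that $x_j$ is feasible for $LP(r,y_j)$, and that no feasible point has a smaller objective value. Proposition \ref{pro3.1} will supply the second part.

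\emph{Feasibility of $x_j$.} Since $x_j$ solves $NLP(y_j)$, we have $x_j\in X$ and $g_i(x_j,y_j)\leq 0$ for $i=1,\cdots,m$. Substituting $x=x_j$ into the constraints of \eqref{3.10-251030} makes the linear terms $r\xi_{j,i}^T(x_j-x_j)$ vanish. Each constraint then reads $g_i(x_j,y_j)\leq 0$, which holds, so $x_j$ is feasible for $LP(r,y_j)$. At this point the objective value is $f(x_j,y_j)+\alpha_j^T 0=f(x_j,y_j)$.

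\emph{Optimality.} Now take any feasible $x$ of $LP(r,y_j)$. Then $x\in X$ and $g_i(x_j,y_j)+r\xi_{j,i}^T(x-x_j)\leq 0$ for all $i$, which are exactly the hypotheses of Proposition \ref{pro3.1}. That proposition applies with the same $r>0$ and the same subgradients satisfying \eqref{3.5-251030}, and gives $\alpha_j^T(x-x_j)\geq 0$. Hence
$$f(x_j,y_j)+\alpha_j^T(x-x_j)\geq f(x_j,y_j).$$
So every feasible point has objective value at least $f(x_j,y_j)$. Since this value is attained at $x_j$, the point $x_j$ is optimal for $LP(r,y_j)$ and the optimal value is $f(x_j,y_j)$.

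No step here is a real obstacle; the substantive work was already done in Proposition \ref{pro3.1}. The only point needing care is that the feasible set of $LP(r,y_j)$ coincides exactly with the set of points to which Proposition \ref{pro3.1} applies. This holds because both are described by the same linearized constraints together with $x\in X$.
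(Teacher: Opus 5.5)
Your proof is correct and takes the same route as the paper, which says only that this proposition follows immediately from Proposition \ref{pro3.1}. You supply the two steps behind that remark: $x_j$ is feasible because the linear terms vanish and $g_i(x_j,y_j)\leq 0$, and Proposition \ref{pro3.1} gives $\alpha_j^T(x-x_j)\geq 0$ on the whole feasible set of $LP(r,y_j)$.
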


Different from the classic OA for dealing with convex MINLP problems, we introduce a new parameter $\rho_j$ for the feasible nonlinear subproblem case and it would be proved to produce a valid linear cut and a better solution when solving MILP relaxations (see Example 3.1).

 \medskip
 
We denote by
\begin{equation}\label{3.21-251030}
	J(x_j):=\{1,\cdots,m\}\backslash I(x_j).
\end{equation}
If $J(x_j)\not=\emptyset$, we set
\begin{equation}\label{3.23-251030}
	\Pi_j:=\max_{i\in J(x_j)}\max_{(x,y)\in X\times (Y\cap \mathbb{Z}^p)}\left\{(\xi_{j,i}, \eta_{j,i})^T\begin{pmatrix}x-x_j\\y-y_j\end{pmatrix}\right\}.
\end{equation}
We select a parameter $\rho_j>0$ as follows:
\begin{equation}\label{3.24-251030}
	\rho_j:=\left \{
	\begin{aligned}
		\frac{-\max\limits_{i\in J(x_j)}g_i(x_j,y_j)}{\Pi_j}, \ &  {\rm if} \ J(x_j)\not=\emptyset, \Pi_j> 0,\\
		1,\ \ \ \ \ \ \ \ \ \ \ \ \ \ \ & {\rm otherwise}.
	\end{aligned}
	\right.
\end{equation}

\begin{rem}

	It is noted that  $\Pi_j$ refers to computing fintely many MILP problems. Further, for each  $i\in J(x_j)\not=\emptyset$, we consider the following problem:
\begin{equation}\label{3.15-260126}
	{\rm P}_i\left\{\begin{aligned}
		\max \  &(\xi_{j,i}, \eta_{j,i})^T\begin{pmatrix}x-x_j\\y-y_j\end{pmatrix}\\
		{\rm s.t.}\ & x\in X, y\in {\rm conv}(Y\cap \mathbb{Z}^p).
	\end{aligned}\right.
\end{equation}
We denote by $\Upsilon_i$ the optimal value of ${\rm P}_i$. Then it is easy to verify that $$\Pi_j=\max_{i\in J(x_j)}\Upsilon_i.$$ 
Each problem ${\rm P}_i$ refers to the convex hull of $Y\cap \mathbb{Z}^p$ and it is impractical to compute ${\rm conv}(Y\cap \mathbb{Z}^p)$ in general. However, for some cases with the speical structure (i.e. $Y$ is the integral polyhedron), it may be easy to get $\Pi_j$ by solving finitely many problems ${\rm P}_i$ given in \eqref{3.15-260126}.


\end{rem}

To equivalently reformulate the convex MINLP problem in \eqref{3.1-251030}, we consider the following MILP problem:

\begin{equation}\label{3.25-251030}
	\left \{
	\begin{aligned}
		\min_{x,y,\theta} \ \  &\theta\\
		{\rm s.t.}\ \ & f(x_j, y_j)+(\alpha_j, \beta_j)^T\begin{pmatrix}x-x_j\\y-y_j\end{pmatrix}\leq \theta\ \ \ \forall y_j\in T, \\
		\ & g_i(x_j, y_j)+\rho_j(\xi_{j,i}, \eta_{j,i})^T\begin{pmatrix}x-x_j\\y-y_j\end{pmatrix}\leq 0\ \ \forall i,\ \forall y_j\in T, \\
		\ \    &x\in X, y\in T, \theta\in\mathbb{R}.
	\end{aligned}
	\right.
\end{equation}


The following theorem is to prove the equivalence of the convex MINLP problem  $(P)$ given in \eqref{3.1-251030} and MILP presented in \eqref{3.25-251030}. This result is crucial as it provides the theoretical guarantee for the construction of the new OA algorithm designed to solve problem $(P)$.

\begin{them}\label{th3.1}
	The MILP problem of \eqref{3.25-251030} is equivalent to convex MINLP problem $(P)$ of \eqref{3.1-251030} in the sense that both problems have the same optimal value and the optimal solution $(\bar x, \bar y)$ to problem $(P)$ corresponds to the optimal solution $(\bar x, \bar y, \bar\theta)$ to problem of \eqref{3.25-251030} with $\bar\theta=f(\bar x, \bar y)$.
\end{them}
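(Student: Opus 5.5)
The plan is to prove the two inequalities between the optimal values separately. The first shows that every feasible point of $(P)$ yields a feasible point of \eqref{3.25-251030} with the same objective value. The second shows that every feasible point of \eqref{3.25-251030} has objective value at least the optimal value of $(P)$. The claimed correspondence of optimal solutions then follows. Throughout I write $z=(x,y)$ and $z_j=(x_j,y_j)$, and I use the subgradient inequality \eqref{2.1a} for the full subgradients $(\alpha_j,\beta_j)$ and $(\xi_{j,i},\eta_{j,i})$ from \eqref{3.6-251030}.

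\emph{Step 1 (validity of the cuts).} Let $(x,y)$ be feasible for $(P)$. Then $y\in T$, so $y\in Y\cap\mathbb{Z}^p$. Fix any $y_j\in T$.

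The objective cut holds with $\theta=f(x,y)$, since $f(z_j)+(\alpha_j,\beta_j)^T(z-z_j)\le f(z)$.

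For the constraint cuts I split into two cases.

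For $i\in I(x_j)$ we have $g_i(z_j)=0$. The subgradient inequality gives $\rho_j(\xi_{j,i},\eta_{j,i})^T(z-z_j)\le \rho_j\big(g_i(z)-g_i(z_j)\big)=\rho_j g_i(z)\le 0$, using $\rho_j>0$.

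For $i\in J(x_j)$ the subgradient inequality is useless, because $g_i(z_j)<0$ and the cut would then have to absorb the full slack. Here I use \eqref{3.23-251030} directly. Since $(x,y)\in X\times(Y\cap\mathbb{Z}^p)$, we have $(\xi_{j,i},\eta_{j,i})^T(z-z_j)\le\Pi_j$.
\begin{itemize}
\item If $\Pi_j>0$, then $g_i(z_j)+\rho_j(\xi_{j,i},\eta_{j,i})^T(z-z_j)\le g_i(z_j)+\rho_j\Pi_j=g_i(z_j)-\max_{k\in J(x_j)}g_k(z_j)\le 0$. Note also $\rho_j>0$, because $\max_{k\in J(x_j)}g_k(z_j)<0$.
\item If $\Pi_j\le 0$, then $\rho_j=1$ and the left-hand side is at most $g_i(z_j)<0$.
\end{itemize}

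Hence $(x,y,f(x,y))$ is feasible for \eqref{3.25-251030}. Consequently the optimal value of \eqref{3.25-251030} is at most that of $(P)$.

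\emph{Step 2 (no spurious points).} Let $(x,y,\theta)$ be feasible for \eqref{3.25-251030}. Since $y\in T$, I choose the index $j$ with $y_j=y$ and keep only the cuts associated with this $y_j$. Because $y-y_j=0$, they become $f(x_j,y_j)+\alpha_j^T(x-x_j)\le\theta$ and $g_i(x_j,y_j)+\rho_j\xi_{j,i}^T(x-x_j)\le0$ for all $i$, with $x\in X$. Proposition \ref{pro3.1} with $r=\rho_j>0$ then gives $\alpha_j^T(x-x_j)\ge0$. Therefore $\theta\ge f(x_j,y_j)$. Since $(x_j,y_j)$ is feasible for $(P)$, $\theta$ is at least the optimal value of $(P)$.

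\emph{Step 3 (correspondence of optimal solutions).} Combining Steps 1 and 2, the two optimal values coincide. If $(\bar x,\bar y)$ solves $(P)$, Step 1 shows that $(\bar x,\bar y,f(\bar x,\bar y))$ is feasible for \eqref{3.25-251030}, and it attains the common optimal value, so it is optimal. Conversely, Step 2 shows that any optimal $(\hat x,\hat y,\hat\theta)$ of \eqref{3.25-251030} satisfies $\hat\theta=f(x_j,y_j)$ with $y_j=\hat y$, so $(x_j,\hat y)$ solves $(P)$.

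The main obstacle is the inactive-constraint case in Step 1. This is exactly where $\Pi_j$ and $\rho_j$ enter, and one must check that the maximum in \eqref{3.23-251030} ranges over a set containing every feasible $(x,y)$. I would also note that attainment of the optimum in $(P)$, for instance when $T$ is finite, is implicitly assumed; otherwise the argument only yields equality of infima.
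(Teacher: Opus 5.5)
Your proof is correct and follows essentially the same route as the paper. The lower bound comes from Proposition \ref{pro3.1} applied at $y_j=\bar y$, and the upper bound from checking that an optimal point of $(P)$ satisfies every cut, using the subgradient inequality for active indices and the bound by $\Pi_j$ for inactive ones. Your direct (rather than contradiction) argument for active constraints, your explicit handling of the subcase $\Pi_j\le 0$, which the paper skips, and your statement that $(x_j,\hat y)$ rather than $(\hat x,\hat y)$ solves $(P)$ are minor tidyings of the paper's argument.
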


{\bf Proof.} Suppose that $(\bar x,\bar y,\bar\theta)$ is an optimal solution  MILP problem of \eqref{3.25-251030} and $(x_{j_0},y_{j_0})$ solves the convex MINLP problem (P) of \eqref{3.1-251030}. We need to prove that $\bar\theta=f(x_{j_0},y_{j_0})$.

We assume that $\bar y=y_{j_1}$ for some $y_{j_1}\in T$. Then
\begin{equation}\label{3.26}
	f(x_{j_1}, y_{j_1})+(\alpha_{j_1},\beta_{j_1})^T\begin{pmatrix}\bar x-x_{j_1}\\ 0\end{pmatrix}\leq \bar\theta
\end{equation}
and
$$
g_i(x_{j_1}, y_{j_1})+\rho_j(\xi_{j,i}, \eta_{j,i})^T\begin{pmatrix}\bar x-x_{j_1}\\0\end{pmatrix}\leq 0, i=1,\cdots,m.
$$
This and Proposition \ref{pro3.1} imply that $\alpha_{j_1}^T(\bar x-x_{j_1})\geq 0$ and by \eqref{3.26}, one has
$$
\bar\theta\geq f(x_{j_1}, y_{j_1})\geq f(x_{j_0}, y_{j_0})
$$
(the second inequality follows as $(x_{j_0}, y_{j_0})$ is the optimal solution to $P$).

We next prove that $\bar\theta\leq f(x_{j_0}, y_{j_0})$. To this aim, it suffices to show that $(x_{j_0},y_{j_0})$ satisfies all constraints with respect to $g_i$ in \eqref{3.25-251030}. Granting this, it follows from the convexity of $f$ that $(x_{j_0},y_{j_0}, f(x_{j_0}, y_{j_0}))$ is feasible to problem of \eqref{3.25-251030}  and consequently $\bar\theta\leq f(x_{j_0}, y_{j_0})$.

Let $y_j\in T$. We only need to consider the case $\rho_j>0$. We first claim that
\begin{equation}\label{3.27}
	g_i(x_j, y_j)+\rho_j(\xi_{j,i}, \eta_{j,i})^T\begin{pmatrix}x_{j_0}-x_j\\y_{j_0}-y_j\end{pmatrix}\leq 0, \ \forall i\in I(x_j).
\end{equation}

Suppose on the contrary that there exists $i\in I(x_j)$ such that
$$
g_i(x_j, y_j)+\rho_j(\xi_{j,i}, \eta_{j,i})^T\begin{pmatrix}x_{j_0}-x_j\\y_{j_0}-y_j\end{pmatrix}>0
$$
and consequently
$$
(\xi_{j,i}, \eta_{j,i})^T\begin{pmatrix}x_{j_0}-x_j\\y_{j_0}-y_j\end{pmatrix}>0
$$
(thanks to $i\in I(x_j)$ and $\rho_j>0$). Hence the convexity of $g_i$ gives that $$g_i(x_{j_0},y_{j_0})\geq g_i(x_j,y_j)+(\xi_{j,i}, \eta_{j,i})^T\begin{pmatrix}x_{j_0}-x_j\\y_{j_0}-y_j\end{pmatrix}=(\xi_{j,i}, \eta_{j,i})^T\begin{pmatrix}x_{j_0}-x_j\\y_{j_0}-y_j\end{pmatrix}>0,$$ which contradicts $g_i(x_{j_0},y_{j_0})\leq 0$. This means that \eqref{3.27} holds.

Let $k\in J(x_j)$. By the choice of $\rho_j$ in \eqref{3.24-251030}, one has
\begin{eqnarray*}
	&&g_k(x_j, y_j)+\rho_j(\xi_{j,k}, \eta_{j,k})^T\begin{pmatrix}x_{j_0}-x_j\\y_{j_0}-y_j\end{pmatrix}\\
	&=& g_k(x_j, y_j)+\frac{-\max\limits_{i\in J(x_j)}g_i(x_j,y_j)}{\Pi_j}(\xi_{j,k}, \eta_{j,k})^T\begin{pmatrix}x_{j_0}-x_j\\y_{j_0}-y_j\end{pmatrix}\\
	&\leq& g_k(x_j, y_j)+\Big(-\max\limits_{i\in J(x_j)}g_i(x_j,y_j)\Big)\\
	&\leq& 0
\end{eqnarray*}
This and \eqref{3.27} imply that $(x_{j_0},y_{j_0})$ satisfies all constraints with respect to $g_i$ in \eqref{3.25-251030}. Hence $\bar\theta=f(x_{j_0},y_{j_0})$.  The proof is complete. \hfill$\Box$ 


\begin{rem}
Our method departs from the classical OA approach for convex MINLPs by introducing parameters $\rho_j$ in the treatment of feasible nonlinear subproblems. Theorem 3.1 proves the validity of the resulting linear cut and shows that its incorporation yields a tighter polyhedral approximation of the nonlinear feasible region within the MILP relaxation. This tightening effect is empirically demonstrated in Example 3.1.
\end{rem}


\subsection{Infeasible Nonlinear Subproblem.} Let  $y_l\in S$. Then subproblem $NLP(y_l)$ is infeasible. For the construction of the algorithm, it is necessary to generate the valid linear cut so as to exclude such infeasible integer $y_l$. To this aim, we consider the following nonlinear feasibility problem $F(y_l)$:
\begin{equation}\label{3.11-251031}
	F(y_l)\left \{
	\begin{aligned}
		\min_{x,u} \  &\sum\limits_{i=1}^mu_i\\ 
		{\rm s.t.} \  &g_i(x, y_l)\leq u_i,i=1,\cdots,m\\
		  & u_i\geq 0,i=1,\cdots,\\
		  & x\in X, u\in\mathbb{R}^m.
	\end{aligned}
	\right.
\end{equation}

For any $i$, we denote by $\varphi_i(x):=\max\{g_i(x, y_l), 0\}$ for each $x$. Note that $X$ is bounded and closed, and thus we can assume that $x_l\in X$ solves problem $F(y_l)$. Then one can verify that $x_l$ is also the optimal solution to 
the following nonlinear problem $F(y_l)^{\prime}$:
\begin{equation*}\label{3.11-251030}
	F(y_l)^{\prime}\left \{
	\begin{aligned}
		\min_{x} \  &\sum_{i=1}^m \varphi_i(x)\\ 
		{\rm s.t.} \   & x\in X.
	\end{aligned}
	\right.
\end{equation*} 
Then by using KKT optimality conditions and Lemma \ref{lem2.2}, one has
\begin{equation}\label{3.12-251030}
		0\in\partial\Big(\sum_{i=1}^m\varphi_i\Big)(x_l)+N(X,x_l)=\sum_{i=1}^m\partial\varphi_i( x_l)+N(X,x_l).
\end{equation}
We divide $\{1,\cdots,m\}$ into three disjoint subsets 
\begin{equation}\label{3.13-251030}
\left\{	\begin{aligned}
	I(x_l):=&\{i\in \{1,\cdots,m\}: g_i(x_l,y_l)=0 \},\\
	I^<(x_l):=&\{i \in \{1,\cdots,m\}: g_i(x_l,y_l)<0\},  \\	I^>(x_l):=&\{i \in \{1,\cdots,m\}: g_i(x_l,y_l)>0\}.
	\end{aligned}
	\right.
\end{equation}
Using the continuity of $g_i$ and Lemma \ref{lem2.3}, one can verify that 
\begin{equation}\label{3.15-251031}
\partial\varphi_i( x_l)=\left\{
	\begin{aligned}
		\{0\},\ \ \ \ \ \ \ \  &i\in I^<(x_l),\\
		[0,1]\partial g_i(\cdot, y_l)(x_l),\ &i\in I(x_l),\\
		\partial g_i(\cdot, y_l)(x_l), \ \ \ & i\in I^>(x_l).
	\end{aligned}
	\right.
\end{equation}
Set
\begin{equation}\label{3.16-201031}
\lambda_{l,i}=0,\ {\rm if} \ i\in I^<(x_l) \  {\rm and} \ 
\lambda_{l,i}=1,\ {\rm if} \ i\in I^>(x_l).
\end{equation}
Then by virtue of \eqref{3.15-251031}, there exist nonnegative multipliers $\lambda_{l,i}\in [0, 1], i\in I(x_l)$ and $\xi_{l,i}\in \partial g_i(\cdot, y_l)(x_l),i=1,\cdots,m$ such that
\begin{equation}\label{3.16-251030}
	-\sum\limits_{i=1}^m\lambda_{l,i}\xi_{l,i}\in N(X,x_l).
\end{equation}
Using Lemma \ref{lem2.1}, there are $\xi_{l,i}\in\partial g_i(\cdot,y_l)(x_l)$, $\eta_{l,i}\in\mathbb{R}^p, i=1,\cdots,m$ such that
\begin{equation}\label{3.17-251030}
(\xi_{l,i}, \eta_{l,i})\in\partial g_i(x_l,y_l), i=1,\cdots,m.
\end{equation}


The following proposition shows that one can generate a valid linear cut by subgradients $(\xi_{l,i}, \eta_{l,i})$ to exclude infeasible integer $y_l$. 
\begin{pro}\label{pro3.3}
	Let $x_l\in X$ solve nonlinear problem $F(y_l)^{\prime}$ and take nonnegative multipliers $\lambda_{l,i}$ and subgradients $(\xi_{l,i}, \eta_{l,i}), i=1,\cdots, m$ satisfying \eqref{3.16-201031}, \eqref{3.16-251030} and \eqref{3.17-251030}. Then the following constraints:
\begin{equation}\label{3.8a}
\left \{
\begin{array}l
g_i(x_l, y_l)+(\xi_{l,i}, \eta_{l,i})^T\begin{pmatrix}x-x_l\\y-y_l\end{pmatrix}\leq 0, i=1,\cdots,m,\\
x\in X, y\in Y\cap \mathbb{Z}^p.
 \end{array}
\right.
\end{equation}
could exclude the infeasible integer variable $y_l$.
\end{pro}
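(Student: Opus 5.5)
Argue by contradiction: suppose some $\bar x\in X$ makes $(\bar x,y_l)$ satisfy all constraints in \eqref{3.8a}. Substituting $y=y_l$ kills the $\eta_{l,i}$ terms, so
\begin{equation*}
g_i(x_l,y_l)+\xi_{l,i}^T(\bar x-x_l)\leq 0,\quad i=1,\cdots,m.
\end{equation*}
The aim is to contradict the infeasibility of $NLP(y_l)$. The plan is to combine these inequalities using the multipliers $\lambda_{l,i}$, as in the proof of Proposition \ref{pro3.1}.

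First, since $y_l\in S$, the point $x_l$ is infeasible for $NLP(y_l)$. Hence $I^>(x_l)\neq\emptyset$, and by \eqref{3.16-201031} we have $\lambda_{l,i}=1$ for every $i\in I^>(x_l)$. Next, since $\bar x-x_l\in T(X,x_l)$, the normal cone condition \eqref{3.16-251030} gives
\begin{equation*}
\sum_{i=1}^m\lambda_{l,i}\,\xi_{l,i}^T(\bar x-x_l)\geq 0.
\end{equation*}
Multiplying the $i$-th cut inequality by $\lambda_{l,i}\ge 0$ and summing over $i$ then yields
\begin{equation*}
\sum_{i=1}^m\lambda_{l,i}\,g_i(x_l,y_l)\leq -\sum_{i=1}^m\lambda_{l,i}\,\xi_{l,i}^T(\bar x-x_l)\leq 0.
\end{equation*}

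Now evaluate the left-hand side term by term. For $i\in I^<(x_l)$ we have $\lambda_{l,i}=0$, and for $i\in I(x_l)$ we have $g_i(x_l,y_l)=0$; both contribute nothing. What remains is
\begin{equation*}
\sum_{i\in I^>(x_l)}g_i(x_l,y_l),
\end{equation*}
which is strictly positive because $I^>(x_l)$ is nonempty. This contradicts the inequality above, so no $x\in X$ makes $(x,y_l)$ satisfy \eqref{3.8a}, and $y_l$ is excluded.

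The main point requiring care is the bookkeeping of the multipliers. The multipliers $\lambda_{l,i}\in[0,1]$ on $I(x_l)$ come from Lemma \ref{lem2.3}, and the decomposition \eqref{3.15-251031} must be matched with the specific subgradients $\xi_{l,i}$ in \eqref{3.16-251030}. When $\lambda_{l,i}=0$ on $I(x_l)$ or $I^<(x_l)$, any $\xi_{l,i}$ can be chosen, so we must check that the chosen $\xi_{l,i}$ are genuine subgradients of $g_i(\cdot,y_l)$. This is what makes the extension via Lemma \ref{lem2.1} to $(\xi_{l,i},\eta_{l,i})\in\partial g_i(x_l,y_l)$ legitimate. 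That extension is not actually needed for the exclusion argument itself, since at $y=y_l$ only the $\xi$-parts matter; it is needed only so that the cut is globally valid for feasible points of $(P)$.
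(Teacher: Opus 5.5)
Your proof is correct and matches the paper's argument step for step. You assume some $\hat x\in X$ satisfies the cuts at $y=y_l$, weight the cut inequalities by $\lambda_{l,i}$, use the normal-cone condition \eqref{3.16-251030} to discard the subgradient terms, and contradict the strict positivity of $\sum_{i\in I^>(x_l)}g_i(x_l,y_l)=\sum_{i=1}^m\varphi_i(x_l)$. The only cosmetic difference is that you get $I^>(x_l)\neq\emptyset$ directly from the infeasibility of $NLP(y_l)$ at $x_l\in X$, whereas the paper states it as the optimal value of $F(y_l)$ being positive.
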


{\bf Proof.} Note that subproblem $NLP(y_l)$ is infeasible and thus the optimal value of $F(y_l)$ is positive; that is, 
\begin{equation*}\label{3.19-251031}
\sum_{i=1}^m \varphi_i(x_l)>0.
\end{equation*}
Suppose on the contrary that there exists $\hat x\in X$ such that $(\hat x, y_l)$ is feasible to the constraint of \eqref{3.8a}. Then
\begin{equation}\label{3.19-251030}
	g_i(x_l, y_l)+ \xi_{l,i}^T(\hat x-x_l)\leq 0,
	\ \ \forall i=1,\cdots,m.
\end{equation}
By virtue of \eqref{3.16-251030}, one has
\begin{equation*}
	\sum_{i=1}^m \lambda_{l,i}\xi_{l,i}^T(\hat x-x_l)\geq 0
\end{equation*}
Multiplying by $\lambda_{l,i}$ with \eqref{3.19-251030}, one has
\begin{eqnarray*}
	0 &\geq & 	\sum_{i=1}^m\lambda_{l,i}g_i(x_l, y_l)+\sum_{i=1}^m \lambda_{l,i}\xi_{l,i}^T(\hat x-x_l)\\
	&=& \sum_{i\in I^>(y_l)}\lambda_{l,i}g_i(x_l, y_l)+\sum_{i=1}^m \lambda_{l,i}\xi_{l,i}^T(\hat x-x_l)\\
	&\geq& \sum_{i\in I^>(y_l)}\lambda_{l,i}g_i(x_l, y_l)\\
	&=&\sum_{i=1}^m \varphi_i(x_l)>0
\end{eqnarray*}
(the last equality holds by the choice of $\lambda_{l,i}$), which is a contradiction. The proof is complete. \hfill $\Box$


\subsection{Construction of New OA Algorithm}
To construct the new OA algorithm for solving convex MINLP problem $(P)$ in \eqref{3.1-251030}, it is necessary to  equivalently reformulate the  problem $(P)$, and thus we need to combine feasible with infeasible nonlinear subproblems. Based on Theorem \ref{th3.1} and Proposition \ref{pro3.3}, we have the following theorem that is to reformulate convex MINLP problem $(P)$ in \eqref{3.1-251030} as an equivalent MILP  master program. This result is to guarantees in theory the construction of the corresponding new OA algorithm for solve problem $(P)$.

\begin{them}\label{th3.2}
	For any $y_j\in T$,
let $x_j\in X$ solve feasible subproblem $NLP(y_j)$ and subgradient $(\alpha_j,\beta_j), (\xi_{j,i},\eta_{j,i}), i=1,\cdots,m$ satisfy \eqref{3.5-251030} and \eqref{3.6-251030} and select a parameter $\rho_j$ as \eqref{3.24-251030}, and for any $y_l\in S$, let $x_l\in X$ solve nonlinear feasibility problem $F(y_l)$ and subgradients $(\xi_{l,i}, \eta_{l,i}), i=1,\cdots, m$ satisfying  \eqref{3.16-251030} and \eqref{3.17-251030}. Consider the following MILP master program (MP):
	\begin{equation}\label{3.28-251030}
		(MP)\left \{
		\begin{aligned}
			\min_{x,y,\theta} \ &\theta\\
			{\rm s.t.}\ \ & f(x_j, y_j)+(\alpha_j,\beta_j)^T\begin{pmatrix}x-x_j\\y-y_j\end{pmatrix}\leq \theta\ \ \forall y_j\in T, \\
			\ \  & g_i(x_j, y_j)+\rho_j (\xi_{j,i},\eta_{j,i})^T\begin{pmatrix}x-x_j\\y-y_j\end{pmatrix}\leq 0, \forall i, 
			\forall y_j\in T,\\
			\ \  & g_i(x_l, y_l)+(\xi_{l,i}, \eta_{l,i})^T\begin{pmatrix}x-x_l\\y-y_l\end{pmatrix}\leq 0, \forall i, 
			\ \forall y_l\in S,\\
			\ \ &   x\in X, y\in Y\cap \mathbb{Z}^p, \theta\in\mathbb{R}.
		\end{aligned}
		\right.
	\end{equation}
	Then master program $(MP)$ is equivalent to the convex MINLP problem (P) of \eqref{3.1-251030} in the sense that both problems have the same optimal value and that the optimal solution $(\bar x, \bar y)$ to problem (P) corresponds to the optimal solution $(\bar x, \bar y, \bar\eta)$ to (MP) with $\bar\theta=f(\bar x, \bar y)$.
\end{them}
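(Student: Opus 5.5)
We prove two inequalities between the optimal value of (MP), attained at some $(\bar x,\bar y,\bar\theta)$, and the optimal value $f(x_{j_0},y_{j_0})$ of $(P)$, where $(x_{j_0},y_{j_0})$ solves $(P)$. The argument mirrors the proof of Theorem \ref{th3.1}; the only new ingredient is Proposition \ref{pro3.3}, used to show that $\bar y$ cannot lie in $S$. We may assume that $T\neq\emptyset$ and that $(P)$ has an optimal solution; this needs a finiteness assumption on $Y\cap\mathbb{Z}^p$, implicit in the definition of $\Pi_j$.

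\emph{Step 1: $\bar\theta\ge f(x_{j_0},y_{j_0})$.} Since $Y\cap\mathbb{Z}^p=T\cup S$, we first exclude $\bar y\in S$. If $\bar y=y_l\in S$, then $(\bar x,y_l)$ satisfies the cuts indexed by $y_l$, namely
$$
g_i(x_l,y_l)+(\xi_{l,i},\eta_{l,i})^T\begin{pmatrix}\bar x-x_l\\0\end{pmatrix}\le 0,\quad i=1,\cdots,m.
$$
Proposition \ref{pro3.3} says that no $x\in X$ satisfies these with $y=y_l$, which is a contradiction. Hence $\bar y=y_{j_1}\in T$. The objective cut for $y_{j_1}$ gives $f(x_{j_1},y_{j_1})+\alpha_{j_1}^T(\bar x-x_{j_1})\le\bar\theta$. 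The $g$-cuts for $y_{j_1}$, with $r=\rho_{j_1}>0$, are exactly the hypotheses of Proposition \ref{pro3.1}, so $\alpha_{j_1}^T(\bar x-x_{j_1})\ge0$. Therefore
$$
\bar\theta\ge f(x_{j_1},y_{j_1})\ge f(x_{j_0},y_{j_0}),
$$
where the last inequality holds because $(x_{j_1},y_{j_1})$ is feasible for $(P)$.

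\emph{Step 2: $\bar\theta\le f(x_{j_0},y_{j_0})$.} We show that $(x_{j_0},y_{j_0},f(x_{j_0},y_{j_0}))$ is feasible for (MP).
\begin{itemize}
\item The objective cuts hold by the subgradient inequality \eqref{2.1a} applied to $f$ at each $(x_j,y_j)$.
\item The $g$-cuts indexed by $T$ are verified exactly as in the proof of Theorem \ref{th3.1}. For active indices $i\in I(x_j)$, the subgradient inequality together with $g_i(x_{j_0},y_{j_0})\le0$ gives the claim. For inactive indices, the choice \eqref{3.24-251030} of $\rho_j$ and the definition \eqref{3.23-251030} of $\Pi_j$ (a maximum over $X\times(Y\cap\mathbb{Z}^p)$, which contains $(x_{j_0},y_{j_0})$) give the bound. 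In the case $\rho_j=1$ with $J(x_j)\ne\emptyset$ and $\Pi_j\le0$, the linear term is nonpositive and $g_k(x_j,y_j)<0$, so the cut holds. We also note that the objective cuts already force $(x_j,y_j)$ into the $T$-part, so nothing else changes.
\item The cuts indexed by $S$ follow directly from convexity: for each $l$ and $i$,
$$
g_i(x_l,y_l)+(\xi_{l,i},\eta_{l,i})^T\begin{pmatrix}x_{j_0}-x_l\\y_{j_0}-y_l\end{pmatrix}\le g_i(x_{j_0},y_{j_0})\le0 .
$$
\end{itemize}
Feasibility gives $\bar\theta\le f(x_{j_0},y_{j_0})$, so equality holds. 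It then follows that $(x_{j_0},y_{j_0},f(x_{j_0},y_{j_0}))$ is optimal for (MP). Conversely, for an optimal $(\bar x,\bar y,\bar\theta)$ of (MP), Step 1 produces an optimal solution $(x_{j_1},y_{j_1})$ of $(P)$ with $f(x_{j_1},y_{j_1})=\bar\theta$. This gives the correspondence stated in the theorem.

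\emph{Main obstacle.} The delicate point is Step 1. It must rule out that (MP) selects an infeasible $\bar y\in S$, and it must extract a lower bound when $\bar y\in T$ using cuts scaled by $\rho_{j_1}$. Both reduce to the earlier propositions, Proposition \ref{pro3.3} and Proposition \ref{pro3.1}, once we check that their hypotheses match the (MP) constraints exactly. In particular, this needs $\rho_{j_1}>0$, which holds by construction since the maximum of $g_i(x_j,y_j)$ over $J(x_j)$ is negative.
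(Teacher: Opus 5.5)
Your proposal is correct and takes the route the paper intends. The paper gives no separate proof of Theorem~\ref{th3.2}; it obtains it from Theorem~\ref{th3.1}, which supplies your argument for the $T$-cuts, together with Proposition~\ref{pro3.3}, which excludes $\bar y\in S$. Your write-up is slightly more complete than the paper's, since you explicitly verify the $S$-cuts at the optimum of $(P)$ by convexity and treat the case $\rho_j=1$ with $J(x_j)\neq\emptyset$, $\Pi_j\le 0$, which the proof of Theorem~\ref{th3.1} skips.
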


\begin{rem}
The KKT conditions play an important role in the equivalent reformulation of MINLP problem (P), and it is noted that subgradients, if not satisfying KKT conditions, may be insufficient for this equivalent reformulation (cf. \cite[Example 3.1]{WeiAli2015-JOGO}).
\end{rem}




Based on Theorem \ref{th3.2}, we construct a new OA algorithm. The algorithm iteratively solves the relaxed master problem $(MP)$ as defined in \eqref{3.28-251030}, which ensures convergence to the optimal solution of the original problem (P) from \eqref{3.1-251030}.

\medskip
{\it 
At iteration $k$, subsets $T^k$ and $S^k$ are defined as follows:
\begin{equation}\label{3.28-251031}
T^k:=T\cap\{y_1,\cdots, y_k\} \ \ {\rm and} \ \ S^k:=S\cap\{y_1,\cdots, y_k\}.
\end{equation}
Exactly one of the following cases occurs:
\begin{itemize}
	\item[\rm (a)] $y_k\in T^k$:  Solve subproblem $NLP(y_k)$ and denote by $x_k$ the optimal solution. Solve the following optimization problem ${\it OP}(x_k,y_k)$:
\begin{equation}\label{3-10}
{\it Find\ }
\left [
\begin{array}l
  (\alpha_k,\beta_k)\\
  (\xi_{k,i},\eta_{k,i})\\
  \lambda_{k,i}\in\mathbb{R}
 \end{array}
\right ]
{\it such \ that} \  \left\{
\begin{array}l
 -\alpha_k-\sum\limits_{i=1}^m\lambda_{k,i}\xi_{k,i}\in N(X, x_k),\\
 (\alpha_k,\beta_k)\in\partial f(x_k,y_k),\\
 (\xi_{k,i},\eta_{k,i})\in\partial g_i(x_k,y_k), i=1,\cdots,m,\\
 \lambda_{k,i}\geq 0, i=1,\cdots,m.
 \end{array}
 \right.
\end{equation}
Obtain subgradients $(\alpha_k,\beta_k)$ and $(\xi_{k,i},\eta_{k,i})$ for all $i=1,\cdots,m$. Select a parameter $\rho_k>0$ as said in \eqref{3.24-251030}.

\item[\rm (b)] $y_k\in S^k$: Solve $F(y_k)$ and denote the solution by $x_k$. Solve the following optimization problem ${\it Sub\mbox-OP}(x_k,y_k)$:
\begin{equation}\label{3-11}
{\it Find\ }
\left [
\begin{array}l
  (\xi_{k,i},\eta_{k,i})\\
  \lambda_{k,i}\in\mathbb{R}
 \end{array}
\right ]
{\it such \ that} \ \left\{
\begin{array}l
 -\sum\limits_{i=1}^m\lambda_{k,i}\xi_{k,i}\in N(X, x_k),\\
 (\xi_{k,i},\eta_{k,i})\in\partial g_i(x_k,y_k),\  i=1,\cdots,m,\\
  \lambda_{k,i}=0, \ \ \ \ \ {\it if} \ g_i(x_k,y_k)<0,\\
  \lambda_{k,i}=1, \ \ \ \ \  {\it if} \  g_i(x_k,y_k)>0,\\
  \lambda_{k,i}\in [0,1], \  {\it if} \ g_i(x_k,y_k)=0.
 \end{array}
 \right.
\end{equation}
Obtain subgradients $(\xi_{k,i},\eta_{k,i})$ for all $i=1,\cdots,m$.
\end{itemize}
}

Let $UBD^k:=\min \{f(x_j,y_j): y_j\in T^k\}$. Consider the following relaxed master program $MP(T^k, S^k)$:
\begin{equation}\label{4.1a}
MP(T^k, S^k)\left \{
\begin{aligned}
\min_{x,y,\theta}\  &\theta\\
 {\rm s.t.}\ \  &\theta<UBD^k\\
 \ \    &f(x_j, y_j)+(\alpha_j, \beta_j)^T\begin{pmatrix}x-x_j\\y-y_j\end{pmatrix}\leq \theta\ \ \forall y_j\in T^k,\ \  \\
 \ \ & g_i(x_j, y_j)+\rho_j(\xi_{j,i}, \eta_{j,i})^T\begin{pmatrix}x-x_j\\y-y_j\end{pmatrix}\leq 0,\ \forall i, \ \forall y_j\in T^k,\\ 
 \ \ & g_i(x_l, y_l)+(\xi_{l,i}, \eta_{l,i})^T\begin{pmatrix}x-x_l\\y-y_l\end{pmatrix}\leq 0,\ \forall i, \ \forall y_l\in S^k,\\ 
 \ \ &  x\in X, y\in Y\cap \mathbb{Z}^p, \theta\in\mathbb{R}.
\end{aligned}
\right.
\end{equation}

The proposed new OA algorithm for solving the relaxed master problems $MP(T^k,S^k)$ is stated as follow.\\

\begin{breakablealgorithm}
\caption{(New OA Algorithm)}
\begin{algorithmic}[1]
\STATE Initialization. Given an initial $y_0\in Y$, set $T^0=S^0:=\emptyset$, $UBD^0:=\infty$ and let $k:=1$
 \FOR{$k=1,2, \cdots,$}
 \IF {$y_k\in T$}  
 \STATE Solve subproblem $NLP(y_k)$ and denote the solution by $x_k$

 Solve ${\it OP}(x_k,y_k)$ in \eqref{3-10} and obtain subgradients $(\alpha_k,\beta_k),(\xi_{k,i},\eta_{k,i})$

Select a parameter $\rho_j>0$ in \eqref{3.24-251030}.

 Set $T^k:=T^{k-1}\cup\{y_k\}$, $S^k:=S^{k-1}$ and $UBD^k:=\min\{UBD^{k-1}, f(x_k,y_k)\}$

 \ELSE
 \STATE Solve $F(y_k)$ and denote the solution by $x_k$

 Solve ${\it Sub\mbox-OP}(x_k,y_k)$ in \eqref{3-11} and obtain subgradients $(\xi_{k,i},\eta_{k,i})$

 Set $S^k:=S^{k-1}\cup\{y_k\}$, $T^k:=T^{k-1}$ and $UBD^k:=UBD^{k-1}$
\ENDIF

 \STATE Solve the relaxation $MP(T^k,S^k)$ and obtain a new integer $y_{k+1}$

  Set $k:=k+1$ and go back to line 3

\ENDFOR
\end{algorithmic}
\end{breakablealgorithm}

\vspace{0.5cm}
The following theorem is to establish the  convergence of Algorithm 1 in finite time. 
\begin{them}
Suppose that the cardinality of $Y\cap \mathbb{Z}^p$ is finite. Then either problem $(P)$ of \eqref{3.1-251030} is infeasible or Algorithm 1 terminates in a finite number steps at an optimal value of problem $(P)$.
\end{them}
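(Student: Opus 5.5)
The plan is to show that no integer assignment is ever generated twice, and then use finiteness of $Y\cap\mathbb{Z}^p$. The termination rule implicit in Algorithm 1 is that the algorithm stops when $MP(T^k,S^k)$ is infeasible. At that point the current $UBD^k$ is reported: it is the optimal value of $(P)$ if $T^k\neq\emptyset$, and if $UBD^k=\infty$ we conclude that $(P)$ is infeasible.

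\textbf{Step 1 (no infeasible $y$ repeats).} Suppose $y_l\in S^k$ for some $k$. By Proposition \ref{pro3.3}, the cuts
\[
g_i(x_l,y_l)+(\xi_{l,i},\eta_{l,i})^T\begin{pmatrix}x-x_l\\ y-y_l\end{pmatrix}\le 0
\]
admit no $x\in X$ when $y=y_l$. These cuts remain in every later relaxation $MP(T^{k'},S^{k'})$ with $k'\ge k$. Hence no later iterate satisfies $y_{k'+1}=y_l$.

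\textbf{Step 2 (no feasible $y$ repeats).} Suppose $y_j\in T^k$ and some later relaxation returns $(x,y_j,\theta)$ with $y=y_j$. The $g$-cuts for index $j$ at $y=y_j$ read
\[
g_i(x_j,y_j)+\rho_j\,\xi_{j,i}^T(x-x_j)\le 0,\qquad i=1,\dots,m,
\]
with $\rho_j>0$. Proposition \ref{pro3.1} (with $r=\rho_j$) then gives $\alpha_j^T(x-x_j)\ge 0$. Combining this with the $f$-cut yields $\theta\ge f(x_j,y_j)\ge UBD^{k'}$, which contradicts the strict constraint $\theta<UBD^{k'}$. So each element of $T$ is visited at most once as well. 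Since $Y\cap\mathbb{Z}^p$ is finite, the relaxation must become infeasible after at most $|Y\cap\mathbb{Z}^p|+1$ iterations.

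\textbf{Step 3 (optimality at termination).} This is the main point. Suppose $(P)$ is feasible with optimal solution $(x^*,y^*)$, so that $y^*\in T$. I would show that, as long as $UBD^k>f(x^*,y^*)$, the point $(x^*,y^*,f(x^*,y^*))$ is feasible for $MP(T^k,S^k)$. This requires checking each family of cuts.

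The $f$-cuts hold by the subgradient inequality for convex $f$. The $g$-cuts from $T^k$ hold by the argument in the proof of Theorem \ref{th3.1}. For $i\in I(x_j)$ this is convexity together with $g_i(x^*,y^*)\le 0$. For $i\in J(x_j)$ it follows from the choice of $\rho_j$ via $\Pi_j$.

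The cuts from $S^k$ hold by convexity of $g_i$:
\[
g_i(x_l,y_l)+(\xi_{l,i},\eta_{l,i})^T\begin{pmatrix}x^*-x_l\\ y^*-y_l\end{pmatrix}\le g_i(x^*,y^*)\le 0.
\]
This is precisely why the equivalence in Theorem \ref{th3.2} holds.

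Consequently, infeasibility of the relaxation implies $UBD^k\le f(x^*,y^*)$. Since $UBD^k$ is always attained at a feasible point of $(P)$, we get $UBD^k=f(x^*,y^*)$. Conversely, if $(P)$ is infeasible then $T=\emptyset$, so $UBD$ stays $\infty$ and the algorithm terminates by Step 1, correctly reporting infeasibility.

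The main obstacle is the case $i\in J(x_j)$ with $\Pi_j\le 0$, where $\rho_j=1$. Here I would use the bound
\[
(\xi_{j,i},\eta_{j,i})^T\begin{pmatrix}x^*-x_j\\ y^*-y_j\end{pmatrix}\le \Pi_j\le 0,
\]
so the cut value is at most $g_i(x_j,y_j)<0$. This closes the case and completes the plan.
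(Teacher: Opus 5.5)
Your proposal is correct and follows essentially the same route as the paper. The no-repetition claim is proved exactly as in the paper: Proposition~\ref{pro3.3} handles $S^k$, and Proposition~\ref{pro3.1} with $r=\rho_j$ together with $\theta<UBD^k$ handles $T^k$. Optimality at termination rests on the same fact that an optimal $(x^*,y^*)$ of $(P)$ satisfies every cut, so infeasibility of $MP(T^k,S^k)$ forces $UBD^k\le r^*$. Your Step 3 is a more direct and explicit version of the paper's case split $r^*=UBD^{k-1}$ versus $r^*<UBD^{k-1}$: it spells out the cut validity, including the $\Pi_j\le 0$ case and the infeasible-$(P)$ branch, which the paper only asserts.
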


{\bf Proof}. We first prove that any integer variable would not be generated more than once. Granting this, the finite termination of Algorithm 1 follows from the finite cardinality of $Y\cap \mathbb{Z}^p$. 

At iteration $k$, we claim:
 {\it 
 if $(\bar x, \bar y, \bar{\theta})$ solves $MP(T^k,S^k)$, then 
$\bar y\not \in T^k\cup S^k$.}

Let $(\bar x, \bar y, \bar{\theta})$ solve $MP(T^k,S^k)$. Then  Proposition \ref{pro3.3} gives that $\bar y\not\in S^k$ and thus it suffices to prove that $\bar y \not\in T^k$. Suppose on the contrary that $\bar y=y_{j_k}$ for some $y_{j_k}\in T^k$. Then $(\bar x, y_{j_k}, \bar{\theta})$ is feasible to  $MP(T^k,S^k)$ and thus one has
\begin{equation}\label{3.31-251030}
		\bar{\theta}<UBD^k\leq f(x_{j_k}, y_{j_k}),
		f(x_{j_k}, y_{j_k})+\alpha_{j_k}^T( x-x_{j_k})\leq \bar{\theta},
\end{equation}
and 
\begin{equation*}
	g_i(x_{j_k}, y_{j_k})+\rho_{j_k} \xi_{j_k,i}^T (\bar x-x_{j_k})\leq 0, \ i=1,\cdots,m.
\end{equation*}
This and Proposition \ref{pro3.1} imply that
$$
\alpha_{j_k}^T(\bar x-x_{j_k})\geq 0
$$
and consequently \eqref{3.31-251030} gives that $f(x_{j_k}, y_{j_k})\leq \bar{\theta}$, which contradicts $\bar{\theta}<f(x_{j_k}, y_{j_k})$. Hence the claim holds.

Now we can assume that Algorithm 1 terminate at $k$-th step for some $k$. Then the relaxation $MP(T^k, S^k)$ is infeasible. To complete the proof, it remains to prove that the optimal value of problem $(P)$ is attainable.

Let $r^*$ denote the optimal value of problem $(P)$ in \eqref{3.1-251030}. Then $r^*\leq UBD^{k-1}$. We  need to consider two cases:
\begin{itemize}
	\item [(a)] $r^*=UBD^{k-1}$: This implies that $r^*=f(x_j, y_j)$ for some $y_j\in T^{k-1}$. 
	\item[(b)] $r^*<UBD^{k-1}$: Then $y_k\in T^k$ (otherwise, $y_k\in S^k$, $UBD^{k}=UBD^{k-1}$ by Algorithm 1 and consequently $MP(T^k, S^k)$ is feasible, a contradiction). This implies that subproblem $NLP(y_k)$ is feasible and thus $r^*\leq f(x_k, y_k) $. We claim that $r^*= f(x_k, y_k) $ ( Otherwise, $f(x_k, y_k)> r^*$ and thus $r^*<UBD^{k}$, which implies that  $MP(T^k,S^k)$is feasible, a contradiction). 
\end{itemize}
Hence cases (a) and (b) mean that the optimal value of problem $(P)$ can be  attainable.  \hfill$\Box$


\subsection{Compared with Classic OA for Convex MINLP}

In this subsection, we compared the proposed new OA with the classic OA for solving convex MINLP problem in which objective and constraint functions are continuously differentiable.

\medskip

{\it Assume that $f,g_i(i=1,\cdots,m)$ appearing in \eqref{3.1-251030} are continuously differentiable}.

\medskip


The classic OA algorithm for convex MINLPs, such as problem \eqref{3.1-251030}, operates by reformulating the MINLP as an equivalent MILP master program. This is achieved using a first-order Taylor expansion, after which the algorithm solves a sequence of MILP relaxations to converge to the MINLP's optimal solution. In the following section, we detail the key differences between this classical approach and our proposed new OA algorithm.


Since functions  $f,g_i, i=1,\cdots,m$  are differentiable, then one can replace subgradients given 
in Algorithm 1 with gradients of functions $f,g_i$
This leads to a simpler new OA algorithm for solving the convex MINLP in \eqref{3.1-251030} compared to Algorithm 1.

\medskip
{\it  
At iteration $k$, let subsets $T^k$ and $S^k$ are defined as said in \eqref{3.28-251030}.

\begin{itemize}
	\item [(a)] If $y_k\in T^k$, then solve subproblem $NLP(y_k)$ and denote by $x_k$ the optimal solution. 	Set
	\begin{equation*}
		\Pi_k:=\max_{i\in J(x_k)}\max_{(x,y)\in X\times (Y\cap \mathbb{Z}^p)}\left\{\nabla g_i(x_k, y_k)^T\begin{pmatrix}x-x_j\\y-y_j\end{pmatrix}\right\}.
	\end{equation*}	
and  select a parameter $\rho_k>0$ as follows:
	\begin{equation}\label{3.33-251030}
		\rho_k:=\left \{
		\begin{aligned}
			\frac{-\max\limits_{i\in J(x_k)}g_i(x_k,y_k)}{\Pi_k}, \ &  {\rm if} \ J(x_k)\not=\emptyset, \Pi_k\geq 0,\\
			1,\ \ \ \ \ \ \ \ \ \ \ \ \ \ \ & {\rm otherwise}.
		\end{aligned}
		\right.
	\end{equation}
	

	\item [(b)] If $y_k\in S^k$, then solve nonlinear feasibility problem $F(y_k)$ and denote by $x_k$ the optimal solution.
\end{itemize} }

Let $UBD^k:=\min \{f(x_j,y_j): y_j\in T^k\}$. We consider the following MILP relaxation $\widetilde{MP}^{k}$ :
\begin{equation}\label{3.35-251030}
	\widetilde{MP}^k
	\left \{
	\begin{aligned}
		\min_{x,y,\theta}\  &\theta\\
		{\rm s.t.}\ \  &\theta<UBD^k\\
		\ \    &f(x_j, y_j)+\nabla f(x_j, y_j)^T\begin{pmatrix}x-x_j\\y-y_j\end{pmatrix}\leq \theta\ \ \forall y_j\in T^k,\ \  \\
		\ \ & g_i(x_j, y_j)+\rho_j \nabla g_i(x_j, y_j)^T\begin{pmatrix}x-x_j\\y-y_j\end{pmatrix}\leq 0,\ \forall i, \ \forall y_j\in T^k,\\ 
		\ \ & g_i(x_l, y_l)+\nabla g_i(x_l, y_l)^T\begin{pmatrix}x-x_l\\y-y_l\end{pmatrix}\leq 0,\ \forall i, \ \forall y_l\in S^k,\\ 
		\ \ &  x\in X, y\in Y\cap \mathbb{Z}^p, \theta\in\mathbb{R}.
	\end{aligned}
	\right.
\end{equation}

The proposed new OA algorithms for convex MINLP problem of \eqref{3.1-251030} is presented as follows:

\begin{breakablealgorithm}
	\caption{(New OA Algorithm for Convex MINLP)}
	\begin{algorithmic}[1]
		\STATE Initialization. Given an initial $y_0\in Y$, set $T^0=S^0:=\emptyset$, $UBD^0:=\infty$ and let $k:=1$
		\FOR{$k=1,2, \cdots,$}
		\IF {$y_k\in T$}  
		\STATE Solve subproblem $NLP(y_k)$ and denote the solution by $x_k$
		
		Select a parameter $\rho_k>0$ as said in \eqref{3.33-251030}.
		
		Set $T^k:=T^{k-1}\cup\{y_k\}$, $S^k:=S^{k-1}$ and $UBD^k:=\min\{UBD^{k-1}, f(x_k,y_k)\}$
		
		\ELSE
		\STATE Solve $F(y_k)$ and denote the solution by $x_k$
		
		
		Set $S^k:=S^{k-1}\cup\{y_k\}$, $T^k:=T^{k-1}$ and $UBD^k:=UBD^{k-1}$
		\ENDIF
		
		\STATE Solve $\widetilde{MP}^k$ and obtain a new integer $y_{k+1}$
		
		Set $k:=k+1$ and go back to line 3

		\ENDFOR
	\end{algorithmic}
\end{breakablealgorithm}

Let $UBD^k:=\min \{f(x_j,y_j): y_j\in T^k\}$. We consider the following MILP relaxation $MP^k$:
\begin{equation}\label{3.34-251030}
	MP^k\left \{
	\begin{aligned}
		\min_{x,y,\theta}\  &\theta\\
		{\rm s.t.}\ \  &\theta<UBD^k\\
		\ \    &f(x_j, y_j)+\nabla f(x_j, y_j)^T\begin{pmatrix}x-x_j\\y-y_j\end{pmatrix}\leq \theta\ \ \forall y_j\in T^k,\ \  \\
		\ \ & g_i(x_j, y_j)+\nabla g_i(x_j, y_j)^T\begin{pmatrix}x-x_j\\y-y_j\end{pmatrix}\leq 0,\ \forall i, \ \forall y_j\in T^k,\\ 
		\ \ & g_i(x_l, y_l)+\nabla g_i(x_l, y_l)^T\begin{pmatrix}x-x_l\\y-y_l\end{pmatrix}\leq 0,\ \forall i, \ \forall y_l\in S^k,\\ 
		\ \ &  x\in X, y\in Y\cap \mathbb{Z}^p, \theta\in\mathbb{R}.
	\end{aligned}
	\right.
\end{equation}

The classic OA algorithm for convex MINLP problem of \eqref{3.1-251030} is presented as follows:

\begin{breakablealgorithm}
	\caption{(OA Algorithm  for Convex MINLP)}
	\begin{algorithmic}[1]
		\STATE Initialization. Given an initial $y_0\in Y$, set $T^0=S^0:=\emptyset$, $UBD^0:=\infty$ and let $k:=1$
		\FOR{$k=1,2, \cdots,$}
		\IF {$y_k\in T$}  
		\STATE Solve subproblem $NLP(y_k)$ and denote the solution by $x_k$
		
		
		Set $T^k:=T^{k-1}\cup\{y_k\}$, $S^k:=S^{k-1}$ and $UBD^k:=\min\{UBD^{k-1}, f(x_k,y_k)\}$
		
		\ELSE
		\STATE Solve $F(y_k)$ and denote the solution by $x_k$
		
		
		Set $S^k:=S^{k-1}\cup\{y_k\}$, $T^k:=T^{k-1}$ and $UBD^k:=UBD^{k-1}$
		\ENDIF
		
		\STATE Solve $MP^k$ and obtain a new integer $y_{k+1}$
		
		Set $k:=k+1$ and go back to line 3

		\ENDFOR
	\end{algorithmic}
\end{breakablealgorithm}

Algorithms 2 and 3 differ only in the introduction of the new parameter  $\rho_j$. This seemingly minor addition, however, leads to a fundamentally different approach in several key respects. Unlike the classical OA algorithm, which relies solely on first-order Taylor expansions, our method constructs valid linear cuts that reformulate the problem into an equivalent MILP. The parameter  $\rho_j$ enables the generation of diverse MILP relaxations, producing a tighter polyhedral approximation of the original nonlinear feasible region. By explicitly leveraging the structural information of the constraint functions $g_i$, our algorithm adapts to the specific problem, moving beyond a generic, first-order linearization.

\begin{figure}[H] 
	\textbf{Example 3.1} Consider the following convex MINLP problem:
	
	\centering
	\begin{minipage}{\textwidth}
		\begin{equation}\label{3.36-251216}
\left\{ \begin{aligned}
				\min_{x,y} \quad & \frac{x^2}{10} - \frac{y}{4.5} + 2 + 0.001y^2 \\
				\text{s.t.} \quad & \frac{x^2}{20} + y \leq 20, \\
				& \frac{(x-1)^2}{40} - y \leq -4, \\
				& 0.275y^{1.5} - 10(x+0.1)^{0.5} \leq 0, \\
				& 0 \leq x \leq 20, \\
				& 0 \leq y \leq 20, y\in \mathbb{Z}.
			\end{aligned}
			\right.			
		\end{equation}
	\end{minipage}
\vspace{0.5\baselineskip}
\begin{minipage}{1\textwidth}
	The optimization process, starting from the initial point $(x_0, y_0) = (1, 4)$, yields the optimal solution $(x^*, y^*) = (1.9752, 14)$, with an optimal objective value of $f^* = -0.5249$.
\end{minipage}

\includegraphics[width=0.5\textwidth]{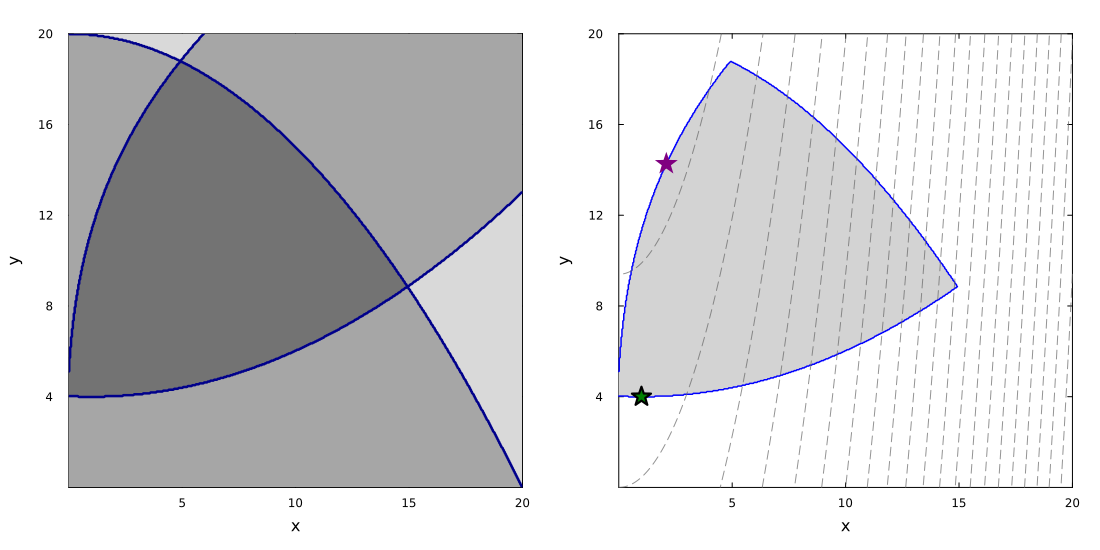}

\caption{ The figure to the left shows the feasible regions defined by the individual constraints of problem \eqref{3.36-251216}. The right figure shows the integer relaxed feasible region (i.e., the region without considering integer restrictions), contours of the objective function, the optimal solution (\textcolor{violet}{$\bigstar$}) of the problem, and the initial point(\textcolor[HTML]{008000}{$\bigstar$}).}
\label{fig:1}
\end{figure}

	\begin{figure}[H] 
	\centering
	\includegraphics[width=0.8\textwidth]{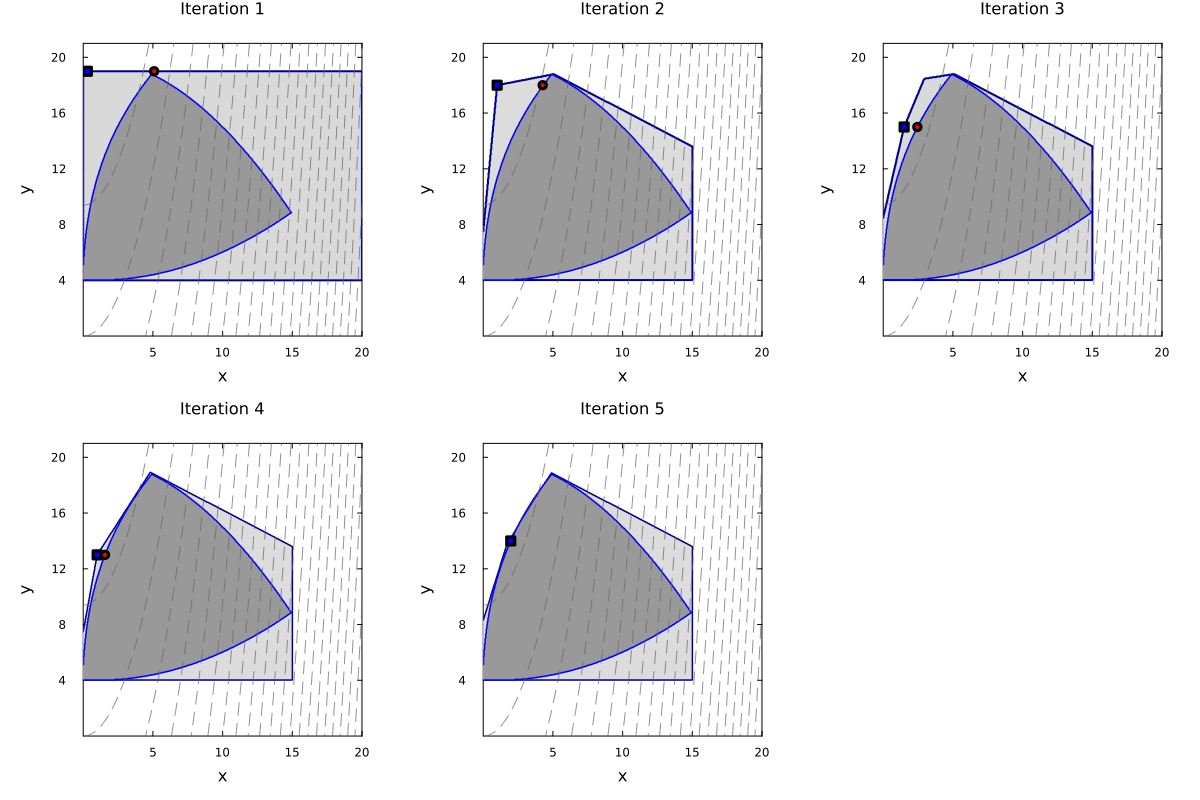}
	
	\caption{Applying the OA algorithm to problem \eqref{3.36-251216} results in multiple iterations, and the progress of the algorithm is shown, with each figure being an iteration. The figures show the feasible region defined by the nonlinear constraints in dark gray, and the light gray region shows the outer approximation obtained by the generated cuts. The squared dots (\textcolor{blue}{$\blacksquare$}) represent the solution obtained from the MILP master problem, and the circular dots (\textcolor{red}{$\bullet$}) represent the solution obtained by the NLP subproblem.}
	\label{fig:2}
\end{figure}

	\begin{figure}[H]
	\centering
	\includegraphics[width=0.8\textwidth]{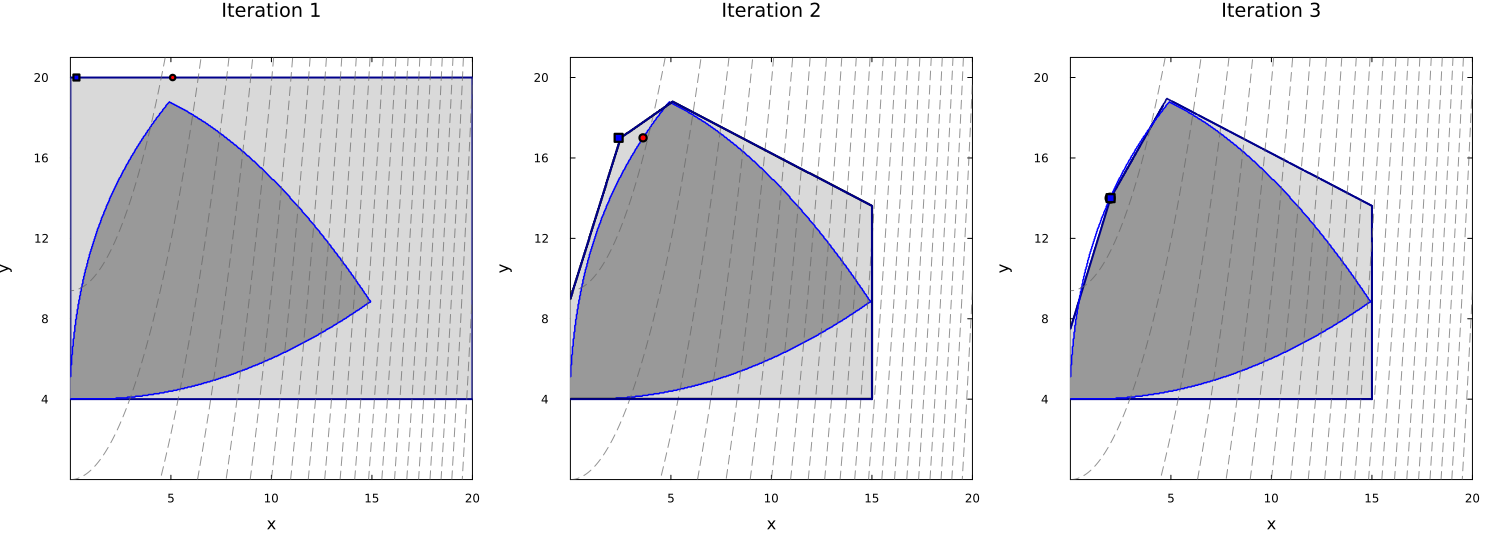}
	\caption{The figure demonstrates the iterative process of solving problem \eqref{3.36-251216} using the New OA algorithm, and it adopts the format of Fig~\ref{fig:2}.}
	\label{fig:3}
\end{figure}


Comparing Fig.~\ref{fig:2} and Fig.~\ref{fig:3} reveals that the New OA algorithm requires fewer iterations than the original OA algorithm to solve problem~\eqref{3.36-251216}. This improvement in efficiency can be attributed to the second iteration, illustrated in Fig.~\ref{fig:4}. In this step, the cutting plane generated by the New OA algorithm removes two feasible but non-optimal points within the master problem's feasible region- specifically,  $y=15$ and  $y=13$. By eliminating these points, the gap between the upper bound (UB) and lower bound (LB) is narrowed, which strengthens the cut and enhances the overall convergence rate of the algorithm.

\begin{figure}[H]
	\centering
	\includegraphics[width=0.8\textwidth]{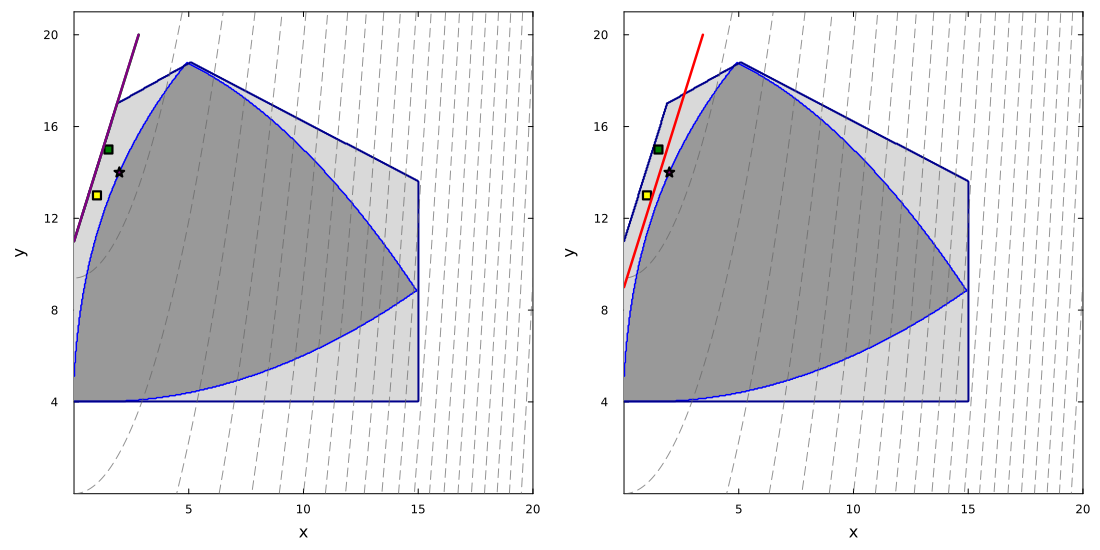}
	\caption{The left figure shows a cutting line (\textcolor{violet}{\rule{0.4cm}{0.6pt}}) generated by the OA algorithm during the second iteration. The right figure shows a cutting line (\textcolor{red}{\rule{0.4cm}{0.6pt}}) produced by the new OA algorithm during the second iteration, where $\rho_2 = 2.3464 / 12.8289 \approx 0.1829$. The yellow square points(\textcolor{yellow}{$\blacksquare$}) in the figure represent feasible points when $y = 13$, and the green square points(\textcolor[HTML]{008000}{$\blacksquare$}) represent feasible points when $y = 15$. The purple star(\textcolor{violet}{$\bigstar$}) represents the optimal solution to this problem. The format from Figure~2 is used here.}
	\label{fig:4}
\end{figure}

\section{Numerical results}\label{sec4}

This section presents a performance comparison between two investigated algorithms: the OA algorithm and the new OA algorithm. Thirty test instances sourced from MINLPLib \cite{BussiecDrudMeeraus2003} were employed for this comparison. The selected instances all satisfy the following criteria: they are convex optimization problems containing both integer and continuous variables, have a globally optimal objective value, include at least one nonlinear term in the objective function or constraints, and vary in their level of solving difficulty. All computational experiments were carried out under identical hardware conditions, specifically using an Intel Core i5-12500H processor (operating at 2.50 GHz) and 16.0 GB of RAM. For termination, and consistent with standard MINLP practices, we employed both an absolute optimality tolerance ($\epsilon$) and a relative optimality tolerance ($\epsilon_{\text{rel}}$). The search process will be terminated when either of the following conditions is met:$$f(\bar{x}, \bar{y}) - LB \leq \epsilon \quad \text{or} \quad \frac{f(\bar{x}, \bar{y}) - LB}{|f(\bar{x}, \bar{y})| + 10^{-10}} \leq \epsilon_{\text{rel}}$$Here, $LB$ denotes the current lower bound, $UB$ the current upper bound, and $\bar{x}, \bar{y}$ the best feasible solution identified during the search.  Numerically, these tolerances were set to $\epsilon = 10^{-5}$ and $\epsilon_{\text{rel}} = 10^{-3}$, respectively. Additionally, a maximum iteration limit of 900 was set for each problem instance to constrain the overall algorithm execution.The detailed description of the number of continuous variables, discrete variables, iteration counts, and run times for each problem is provided in Table~\ref{tab:detailed_examples}. For the instances that failed to run successfully, both the iteration count and run time are denoted by the symbol `---'. One instance in the test set, which the OA algorithm failed to solve within a $0.1\%$ optimality gap, was successfully solved by the new OA algorithm. Furthermore, the comparative results show that the new OA algorithm required fewer iterations than the OA algorithm for 16 instances. Even for those instances where the iteration count was comparable to or slightly higher, the new OA algorithm exhibited superior time efficiency, fully validating its effectiveness in enhancing overall solution efficiency.To further highlight the advantages of the new OA algorithm over the OA algorithm, the following section provides a detailed analysis based on two specific numerical examples.

For the instance \texttt{cvxnonsep\_normcon30}, a significant improvement in iteration efficiency can be observed: the number of iterations was reduced from 551 in the OA algorithm to 98, and the solution time decreased by 49.5\%, effectively lowering computational costs.

\begin{figure}[H]
	\centering
	\includegraphics[width=0.7\textwidth]{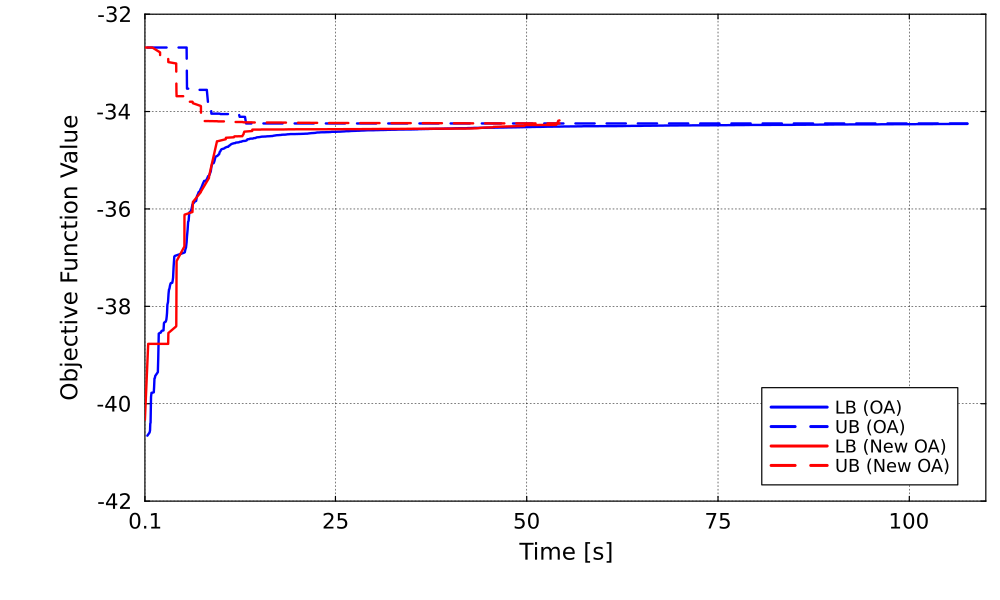}
	\caption{Bound profiles of the upper bound (UB) and lower bound (LB) over computation time for instance cvxnonsep\_normcon30, solved by the OA and New OA optimization algorithms.}
	\label{fig:5}
\end{figure}

For the instance \texttt{cvxnonsep\_psig20}, the new OA algorithm also demonstrated outstanding performance, requiring only 50 iterations and 3.90 seconds to achieve a 0.1\% optimality gap. Compared to the OA algorithm (128 iterations, 7.95 seconds), the new OA algorithm reduced the iteration count by 78 and shortened the solution time by 4.05 seconds.

\begin{figure}[H]
	\centering
	\includegraphics[width=0.7\textwidth]{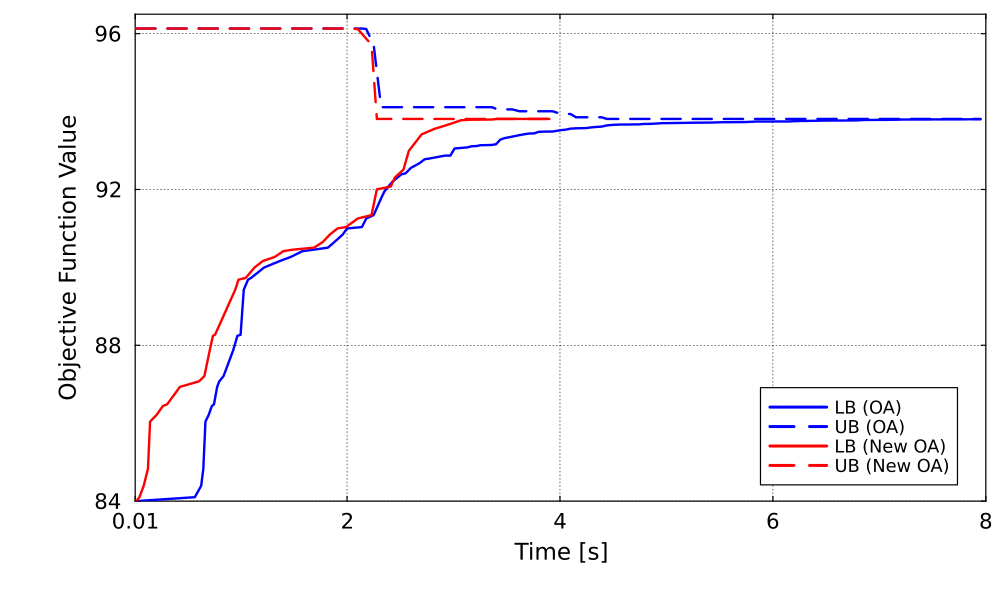}
	\caption{Bound profiles of the upper bound (UB) and lower bound (LB) over computation time for instance cvxnonsep\_psig20, solved by the OA and New OA optimization algorithms.}
	\label{fig:6}
\end{figure}

\begin{center}
	\setlength{\LTpre}{0pt}
	\setlength{\LTpost}{0pt}
	
	\begin{longtable}{
			>{\centering\arraybackslash}p{0.26\textwidth}
			>{\centering\arraybackslash}p{0.15\textwidth}
			>{\centering\arraybackslash}p{0.18\textwidth}
			>{\centering\arraybackslash}p{0.12\textwidth}
			>{\centering\arraybackslash}p{0.10\textwidth}
		}
		
		\caption{Comparison of the OA and New OA Algorithms.} \\[-18pt]
		\label{tab:detailed_examples} \\

		\toprule
		\makecell[c]{Instance(\# discrete \\ vars, \# continuous \\ vars, \# constraints)}
		& \makecell[c]{Solution \\algorithm}
		& \makecell[c]{Optimization \\ Value}
		& Iterations
		& Time(s) \\
		\midrule
		\endfirsthead

		\caption{Comparison of the OA and New OA Algorithms.} \\
		\toprule
		\makecell[c]{Instance(\# discrete \\ vars, \# continuous \\ vars, \# constraints)}
		& \makecell[c]{Solution \\algorithm}
		& \makecell[c]{Optimization \\ Value}
		& Iterations
		& Time(s) \\
		\midrule
		\endhead
		
		\midrule
		\multicolumn{5}{r}{Continued on next page} \\
		\endfoot
		
		\bottomrule
		\endlastfoot
		\multirow{2}{*}{\makecell[c]{clay0203m\\(18, 12, 54)}} &
		OA & \multirow{2}{*}{41573.26} & 10 & 7.82 \\*  
		& New OA & & 8 & 6.46 \\
		\midrule
		
		\multirow{2}{*}{\makecell[c]{clay0204hfsg\\(32, 132, 234)}} &
		OA & \multirow{2}{*}{6545.00} & 2 & 2.80 \\*
		& New OA & & 2 & 0.42 \\
		\midrule
		
		\multirow{2}{*}{\makecell[c]{clay0204m\\(32, 20, 90)}} &
		OA & \multirow{2}{*}{6545.00} & 6 & 3.55 \\*
		& New OA & & 4 & 1.29 \\
		\midrule
		
		\multirow{2}{*}{\makecell[c]{clay0303m\\(21, 12, 66)}} &
		OA & \multirow{2}{*}{26669.10} & 16 & 7.23 \\*
		& New OA & & 16 & 3.79 \\
		\midrule
		
		\multirow{2}{*}{\makecell[c]{cvxnonsep\_normcon20\\(10, 10, 1)}} &
		OA & \multirow{2}{*}{-21.75} & 122 & 16.10 \\*
		& New OA & & 100 & 4.33 \\
		\midrule
		
		\multirow{2}{*}{\makecell[c]{cvxnonsep\_normcon30\\(15, 15, 1)}} &
		OA & \multirow{2}{*}{-34.24} & 551 & 107.59 \\*
		& New OA & & 98 & 54.28 \\
		\midrule
		
		\multirow{2}{*}{\makecell[c]{cvxnonsep\_normcon40\\(20, 20, 1)}} &
		OA & \multirow{2}{*}{-32.63} & 168 & 147.18 \\*
		& New OA & & 193 & 137.96 \\
		\midrule
		
		\multirow{2}{*}{\makecell[c]{cvxnonsep\_nsig20\\(10, 10, 1)}} &
		OA & \multirow{2}{*}{80.95} & 133 & 8.62 \\*
		& New OA & & 120 & 4.42 \\
		\midrule
		
		\multirow{2}{*}{\makecell[c]{cvxnonsep\_nsig30\\(15, 15, 1)}} &
		OA & \multirow{2}{*}{130.63} & 487 & 81.92 \\*  
		& New OA & & 491 & 58.03 \\
		\midrule
		
		\multirow{2}{*}{\makecell[c]{cvxnonsep\_nsig40\\(20, 20, 1)}} &
		OA & \multirow{2}{*}{133.96} & 824 & 351.27 \\*
		& New OA & & 389 & 104.68 \\
		\midrule
		
		\multirow{2}{*}{\makecell[c]{cvxnonsep\_pcon20\\(10, 10, 1)}} &
		OA & \multirow{2}{*}{-21.51} & 54 & 12.37 \\*
		& New OA & & 51 & 1.36 \\
		\midrule
		
		\multirow{2}{*}{\makecell[c]{cvxnonsep\_pcon30\\(15, 15, 1)}} &
		OA & \multirow{2}{*}{-35.99} & 140 & 17.71 \\*
		& New OA & & 50 & 4.72 \\
		\midrule
		
		\multirow{2}{*}{\makecell[c]{cvxnonsep\_pcon40\\(20,20,1)}} &
		OA & \multirow{2}{*}{-46.60} & 262 & 52.36 \\*
		& New OA & & 258 & 34.82 \\
		\midrule
		
		\multirow{2}{*}{\makecell[c]{cvxnonsep\_psig20\\(10,10,0)}} &
		OA & \multirow{2}{*}{93.81} & 128 & 7.95 \\*
		& New OA & & 50 & 3.90 \\
		\midrule
		
		\multirow{2}{*}{\makecell[c]{cvxnonsep\_psig30\\(15,15,0)}} &
		OA & \multirow{2}{*}{79.00} & 330 & 48.69 \\*
		& New OA & & 330 & 31.13 \\
		\midrule
		
		\multirow{2}{*}{\makecell[c]{ex4\\(25,11,30)}} &
		OA & \multirow{2}{*}{-8.06} & 3 & 0.45 \\*
		& New OA & & 3 & 0.19 \\
		\midrule
		
		\multirow{2}{*}{\makecell[c]{ex1223b\\(4,3,9)}} &
		OA & \multirow{2}{*}{4.58} & 5 & 3.62 \\*
		& New OA & & 5 & 0.96 \\
		\midrule
		
		\multirow{2}{*}{\makecell[c]{ex1223\\(4,7,13)}} &
		OA & \multirow{2}{*}{4.58} & 6 & 2.14 \\*
		& New OA & & 5 & 0.16 \\
		\midrule
		
		\multirow{2}{*}{\makecell[c]{fac1\\(6,16,18)}} &
		OA & \multirow{2}{*}{160912612.40} & 4 & 1.75 \\*
		& New OA & & 4 & 0.99 \\
		\midrule
		
		\multirow{2}{*}{\makecell[c]{fac2\\(12,54,33)}} &
		OA & \multirow{2}{*}{331837498.20} & -- & -- \\*
		& New OA & & 7 & 1.35 \\
		\midrule
		
		\multirow{2}{*}{\makecell[c]{flay02h\\(4,42,51)}} &
		OA & \multirow{2}{*}{37.95} & 2 & 0.91 \\*
		& New OA & & 2 & 0.86 \\
		\midrule
		
		\multirow{2}{*}{\makecell[c]{flay02m\\(4,10,11)}} &
		OA & \multirow{2}{*}{37.95} & 2 & 0.69 \\*
		& New OA & & 1 & 0.06 \\
		\midrule
		
		\multirow{2}{*}{\makecell[c]{flay03h\\(12,110,144)}} &
		OA & \multirow{2}{*}{48.99} & 8 & 3.61 \\*
		& New OA & & 9 & 0.83 \\
		\midrule
		
		\multirow{2}{*}{\makecell[c]{flay03m\\(12,14,24)}} &
		OA & \multirow{2}{*}{48.99} & 8 & 1.29 \\*
		& New OA & & 7 & 0.97 \\
		\midrule
		
		\multirow{2}{*}{\makecell[c]{flay04m\\(24,18,42)}} &
		OA & \multirow{2}{*}{54.41} & 29 & 7.93 \\*
		& New OA & & 21 & 3.96 \\
		\midrule
		
		\multirow{2}{*}{\makecell[c]{syn30m\\(30, 70, 167)}} &
		OA & \multirow{2}{*}{138.16} & 7 & 5.25 \\*
		& New OA & & 4 & 2.88 \\
		\midrule
		
		\multirow{2}{*}{\makecell[c]{synthes2\\(5, 6, 14)}} &
		OA & \multirow{2}{*}{73.04} & 3 & 0.36 \\*
		& New OA & & 3 & 0.19 \\
		\midrule
		
		\multirow{2}{*}{\makecell[c]{synthes3\\(8, 9, 23)}} &
		OA & \multirow{2}{*}{68.01} & 4 & 1.56 \\*
		& New OA & & 4 & 0.61\\
		\midrule
		
		\multirow{2}{*}{\makecell[c]{sssd18-06\\(126, 24, 66)}} &
		OA & \multirow{2}{*}{397992.29} & 16 & 16.51 \\*
		& New OA & & 17 & 13.26 \\
		\midrule
		
		\multirow{2}{*}{\makecell[c]{sssd12-05\\(75,20,52)}} &
		OA & \multirow{2}{*}{281408.64} & 11 & 7.50 \\*
		& New OA & & 10 & 2.72 \\
		
	\end{longtable}
\end{center}

Therefore, based on the boundary convergence trends and the experimental data in Table \ref{tab:detailed_examples}, it is evident that New OA significantly improves the performance of the original OA algorithm in solving MINLP problems. Specifically, for test cases \texttt{cvxnonsep\_normcon40}, \texttt{cvxnonsep\_nsig30}, \texttt{flay03h}, and \texttt{sssd18-06}, the new OA algorithm achieves a notable reduction in computation time despite requiring a higher number of iterations. This demonstrates that, while maintaining the same solution tolerance, the enhanced algorithm attains greater computational efficiency, completing the solution process more quickly. These results confirm the practical effectiveness of the new OA algorithm.


\section{Concluding remarks}


This paper presents a novel OA method for convex  MINLP problems. The core of the approach is the reformulation of the original MINLP as an equivalent MILP master problem and the MILP relaxation can generate a tighter polyhedral region to approximate the nonlinear feasible region when compared  with the conventional OA.

A key innovation is the introduction of new parameters $\rho_j$ in \eqref{3.24-251030} derived from feasible subproblems. By strategically incorporating these parameters into the MILP relaxations, the new OA algorithm is constructed to produce MILP relaxations that are able to generate tighter linear approximations than the classical OA. Numerical experiments demonstrate that the proposed new OA method may be more efficient than the classical OA, as it generates relaxations that more closely approximate the original nonlinear feasible region.

This efficiency stems from an alternative mechanism for generating valid linear cuts, which extends beyond the reliance on standard first-order Taylor expansions. The use of problem-specific parameters highlights that effectively leveraging the inherent structure of the MINLP is also crucial for the performance of the OA algorithm.





\end{document}